\newtheorem{theorem}{Theorem}
\numberwithin{theorem}{section}
\newtheorem{proposition}[theorem]{Proposition}
\newtheorem{lemm}[theorem]{Lemma}
\newtheorem{example}[theorem]{Example}
\newtheorem{conjecture}[theorem]{Conjecture}
\theoremstyle{definition}
\newcommand{\R}{\mathbb{R}}
\newcommand{\C}{\mathbb{C}}
\DeclareMathOperator{\Tr}{Tr}
\DeclareMathOperator{\spn}{span}
 \title{The degree of the central curve in semidefinite, linear, and quadratic programming}
  \author{Serkan Ho\c{s}ten}
  \address{%
  San Francisco State University \\
\email{serkan@sfsu.edu}
}
\author{Isabelle Shankar}
\address{%
University of California, Berkeley \\
\email{isabelle\_shankar@berkeley.edu}
}
\author{Ang\'{e}lica Torres}
\address{%
  KTH Royal Institute of Technology \\
\email{amtb@kth.se}
}
\date{2020/28/11}
\begin{document}
\maketitle
\begin{abstract}
\noindent 
The Zariski closure of the central path which interior point algorithms track in convex optimization problems such as linear, quadratic, and semidefinite programs is an algebraic curve. The degree of this curve
has been studied in relation to the complexity of these interior point algorithms, and  for linear programs
it was computed by De Loera, Sturmfels, and Vinzant in 2012. We show that the degree of the central curve for generic semidefinite programs is equal to the maximum likelihood degree of linear concentration models. New results from the intersection theory of the space of complete quadrics imply that this is a polynomial in the size of semidefinite matrices with degree equal to the number of constraints. Besides its degree we explore the arithmetic genus of the same curve. 
We also compute the degree of the central curve for generic linear programs with different techniques which extend to  bounding  the same degree for generic quadratic programs.
\end{abstract}

\section{Introduction}

Let $\mathcal{S}_\R^m$ and $\mathcal{S}_\C^m$ be the vector spaces
of $m \times m$ symmetric matrices with real and complex entries, respectively. Our starting point is semidefinite programs of the form

\begin{equation}\label{eq:SDP_primal}
    \begin{split}
        \text{minimize } & \langle C, X\rangle\\
        \text{subject to } & \langle A_i, X \rangle = b_i, \quad i=1,\ldots ,d\\
        & X\succeq 0
    \end{split}
\end{equation}
where $C$ and $A_i, i=1, \ldots, d$, are in $\mathcal{S}_\R^m$, and $b_i \in \R$ for $i=1, \ldots, d$. We use the standard Euclidean
inner product $\langle Y, Z \rangle = \Tr(YZ)$ on $\mathcal{S}_\R^m$,
and $X \succeq 0$ means that $X$ belongs to the cone of $m \times m$ positive semidefinite matrices. Typically, we will assume that 
the cost matrix $C$, the constraint matrices $A_1, \ldots, A_d$, and 
$b = (b_1, \ldots, b_d)^t$ are generic. This assures, among other things, that if  \eqref{eq:SDP_primal} is feasible, it is strictly feasible.

The central curve of the above semidefinite program is obtained from the Karush-Kuhn-Tucker (KKT) conditions to an auxiliary optimization problem with a logarithmic barrier function. The KKT conditions are

\begin{equation}\label{eq:KKT_conditions_SDP}
    \begin{split}
        &C-\lambda X^{-1}-\Sigma _{i=1}^d y_iA_i =0,\\
        &\langle A_i, X \rangle =b_i, \quad i=1,\ldots ,d, \\
        &X\succeq 0
    \end{split}
\end{equation}
where $y_1,\ldots,y_d$ are the dual variables to the dual semidefinite program. 
\begin{dfn} \label{dfn:SDPcentralcurve}
    Let $(X^*(\lambda),y^*(\lambda))$ be the unique solution of the system~\eqref{eq:KKT_conditions_SDP} for a fixed $\lambda > 0$. The (primal) \emph{central curve} $\mathcal{C}_{SDP}(C,\{A_i\},b)$ is the projection onto $\mathcal{S}_\C^m$ of the Zariski closure in $\mathcal{S}_\C^m \times \C^d$
    of $\{(X^*(\lambda),y^*(\lambda)): \lambda > 0\}$.
\end{dfn}

The central curve contains the \emph{central path} $\{X^*(\lambda): \lambda > 0\}$. Interior point algorithms follow a piecewise linear approximation to the central path 
to obtain an optimal solution to \eqref{eq:SDP_primal} as $\lambda$ approaches zero  \cite{BV2004,FM1968,FGW2002,NW2006,NN1994}. The degree of $\mathcal{C}_{SDP}(C,\{A_i\},b)$ can be used to give an upper bound on the total curvature of the central path which is a heuristic measure on the number of steps interior point algorithms will take to find an optimal solution. 

Interior point methods were first developed for linear programming problems, and the study of the central curve for linear programming from the perspective of algebraic geometry was initiated by Bayer and Lagarias in \cite{BL89-1} and \cite{BL89-2}. Dedieu, Malajovich, and Shub \cite{DMS05} studied the total curvature of the central path for linear programs in relation to bounding the number of iterations interior point algorithms take. By now we know that the total curvature can be exponential in the dimension of the ambient space \cite{ABGJ18}. Most relevant to our work, De Loera, Sturmfels, and Vinzant \cite{DSV12} obtained a breakthrough by computing the degree of the linear programming central curve. 
Given the linear program
\begin{equation}\label{eq:LP_primal}
    \begin{split}
        \text{minimize } & cx\\
        \text{subject to } & Ax=b\\
        & x \geq 0,
    \end{split}
\end{equation}
where $c \in \R^m$ is a row vector, $A$ is $d \times m$ matrix of rank $d$, and $b \in \R^d$ is a column vector, they have related this degree to the degree of a {\it reciprocal variety} and a matroid invariant. 
\begin{thm} \cite[Lemma 11]{DSV12} For generic $b$ and $c$, the degree
of the central curve of the linear program \eqref{eq:LP_primal} is 
equal to the degree of the reciprocal variety {\small 
$$\mathcal{L}_{A,c}^{-1} := \overline{ \left \{  (u_1, \ldots, u_m) \in \C^m \,: \, 
\left(\frac{1}{u_1}, \ldots ,\frac{1}{u_m}\right) \in \mathrm{rowspan}\left( \begin{array}{c} A \\ c \end{array} \right) \mbox{ and } u_i \neq 0, i=1, \ldots, m \right \}}  $$ }
as well as the M\"obius number $|\mu(A,c)|$ of the rank $d+1$ 
matroid associated to the row span of 
$\left( \begin{array}{c} A \\ c \end{array} \right)$. When $A$ is also
generic, the degree of the central curve is equal to 
$$ \binom{m-1}{d}.$$
\end{thm}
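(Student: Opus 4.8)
The plan is to reduce the statement to the geometry of the reciprocal linear space associated with $\binom{A}{c}$ and then to a computation on the uniform matroid. \textbf{Step 1 (rewriting the central curve).} The auxiliary barrier problem for \eqref{eq:LP_primal} minimizes $cx-\lambda\sum_{j=1}^{m}\log x_j$ subject to $Ax=b$, whose KKT equations are $c-\lambda(1/x_1,\dots,1/x_m)=yA$ and $Ax=b$. Eliminating $y$ and $\lambda$, the first equation forces $(1/x_1,\dots,1/x_m)\in\tfrac{1}{\lambda}\bigl(c-\mathrm{rowspan}(A)\bigr)$, and as $\lambda$ runs over $\C^{*}$ these translates sweep out a Zariski-dense subset of $\mathcal{L}_{A,c}:=\mathrm{rowspan}\binom{A}{c}$, using that $c\notin\mathrm{rowspan}(A)$ for generic $c$. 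Hence $x\in\mathcal{L}_{A,c}^{-1}$, so
$$\mathcal{C}_{LP}=\overline{\{x\in(\C^{*})^{m}:Ax=b\}\cap\mathcal{L}_{A,c}^{-1}}.$$
Since coordinatewise inversion is a birational involution of the torus, $\dim\mathcal{L}_{A,c}^{-1}=d+1$, and $\{Ax=b\}$ has codimension $d$, so the central curve is a linear section of the $(d+1)$-fold $\mathcal{L}_{A,c}^{-1}$ by $d$ hyperplanes; in particular $\deg\mathcal{C}_{LP}\le\deg\mathcal{L}_{A,c}^{-1}$. (Dropping the dual variables is birational on this curve for generic data, so it does not change the degree.)

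\textbf{Step 2 (degree preservation — the main obstacle).} The delicate point is the reverse inequality together with irreducibility. The slicing hyperplanes $a_i\!\cdot\! x=b_i$ have \emph{fixed} normals $a_1,\dots,a_d$, only $b$ being generic, yet I must show this specific codimension-$d$ affine space cuts $\mathcal{L}_{A,c}^{-1}$ exactly as generically as $d$ generic hyperplanes would — giving an irreducible section of degree $\deg\mathcal{L}_{A,c}^{-1}$ — and that this section avoids both the hyperplane at infinity and the coordinate hyperplanes, so that it coincides with the central curve (which is a priori only one of its components). I would prove this by passing to the projective closure $\overline{\mathcal{L}_{A,c}^{-1}}\subseteq\mathbb{P}^{m}$, setting up the incidence variety $\{(x,b):x\in\mathcal{L}_{A,c}^{-1},\ Ax=b\}$ over the parameter $b$, and running a Bertini/generic-smoothness argument, using that the boundary strata of $\overline{\mathcal{L}_{A,c}^{-1}}$ (its traces on the coordinate hyperplanes and at infinity) have strictly smaller dimension and therefore cannot absorb the intersection for generic $b$. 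This is the technical heart of the argument; granting it, $\deg\mathcal{C}_{LP}=\deg\mathcal{L}_{A,c}^{-1}$.

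\textbf{Step 3 (reciprocal degree $=$ Möbius number).} Next I would invoke the theory of reciprocal linear spaces: if $\mathcal{L}\subseteq\C^{m}$ realizes a loopless matroid $M$ of rank $r$, then $\dim\mathcal{L}^{-1}=r$ and $\deg\mathcal{L}^{-1}=|\mu(M)|$, the absolute value of the Möbius number $\mu(\hat 0,\hat 1)$ of the lattice of flats — equivalently $|\chi_M(0)|$, the constant term of the characteristic polynomial. Since $\mathcal{L}_{A,c}$ realizes the rank-$(d+1)$ matroid on the columns of $\binom{A}{c}$, which is exactly the matroid $(A,c)$ of the statement, this gives $\deg\mathcal{L}_{A,c}^{-1}=|\mu(A,c)|$. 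The degree formula itself can be cited (Proudfoot--Speyer, Terao, or \cite{DSV12}) or reproved by intersecting $\mathcal{L}^{-1}$ with a generic flag and matching the count with a no-broken-circuit enumeration; I regard this as the second nontrivial ingredient, but a well-documented one.

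\textbf{Step 4 (the uniform matroid).} Finally, when $A$ is also generic the matrix $\binom{A}{c}$ has all maximal minors nonzero, so $(A,c)=U_{d+1,m}$, whose lattice of flats consists of $\emptyset$, every subset of $[m]$ of size $\le d$, and $[m]$; each proper nonempty flat lies in a Boolean interval and hence contributes $(-1)^{|F|}$, so inclusion--exclusion gives
$$|\mu(U_{d+1,m})|=\Bigl|\sum_{k=0}^{d}(-1)^{k}\binom{m}{k}\Bigr|=\binom{m-1}{d}.$$
Chaining the four steps yields $\deg\mathcal{C}_{LP}=\deg\mathcal{L}_{A,c}^{-1}=|\mu(A,c)|$ in general, and $\binom{m-1}{d}$ when $A$ is generic as well.
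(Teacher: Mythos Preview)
Your proposal is sound and follows essentially the original argument of De~Loera--Sturmfels--Vinzant: identify the central curve as an affine linear section of the reciprocal variety $\mathcal{L}_{A,c}^{-1}$, argue (via a Bertini-type or Cohen--Macaulay argument) that this section is proper and hence computes the degree, and then invoke the Proudfoot--Speyer/Terao formula $\deg\mathcal{L}^{-1}=|\mu(M)|$ together with the explicit evaluation on the uniform matroid. That is exactly the route behind \cite[Lemma~11]{DSV12}, which is what the present paper is \emph{citing}, not reproving; the paper itself offers no proof of the reciprocal-variety or M\"obius-number equalities.

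What the paper \emph{does} prove independently is only the final equality $\psi_{LP}(m,d)=\binom{m-1}{d}$ in the fully generic case, and by a genuinely different method (Theorem~\ref{LP-volume}). Instead of matroid/reciprocal-variety machinery, it parametrizes the affine space $\{Ax=b,\ ex=f\}$ and substitutes into the cleared KKT equations to get $m$ equations in $m$ variables with common Newton polytope a pyramid over $\Delta_{m-d-1}\times\Delta_d$; Bernstein's theorem then reduces the count to the normalized volume of this product of simplices, computed via the staircase triangulation. Your approach recovers the full statement, including the M\"obius-number interpretation for arbitrary (non-generic) $A$; the paper's polyhedral approach sacrifices that generality but has the payoff that it extends verbatim to quadratic programs (Theorem~\ref{QP-volume}), where no reciprocal-variety description is available.

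One small caution on your Step~2: you correctly flag the fixed-normal issue, but ``Bertini/generic-smoothness'' alone does not directly give degree preservation for a specific pencil of hyperplanes; in \cite{DSV12} this step is handled by the Cohen--Macaulayness of the reciprocal variety (so that \emph{any} linear section of the right codimension has the expected degree), and you would do well to name that property rather than leave it to a dimension count on boundary strata.
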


Our main contribution is Theorem \ref{SDP-theorem} where we prove that the degree of the central curve for the SDP \eqref{eq:SDP_primal}
when $C$ and $b$ are generic is equal to the maximum likelihood degree (ML degree) of the linear concentration model generated by $\{A_i\}$ and $C$. When $\{A_i\}$ are also generic, this degree is equal to the degree of the reciprocal variety associated to the linear subspace
$\mathcal{L}_{\{A_i\}, C} = \spn \{A_1, \ldots, A_d, C\}$:
$$ \mathcal{L}_{\{A_i\},C}^{-1} := \overline{ \left \{ X \in \mathcal{S}_\C^m \, : \, X^{-1} \in \mathcal{L}_{\{A_i\}, C} \right \}}.$$
We further show in Corollary \ref{cor:SDP-symmetry} that, when $\{A_i\}, C$, and $b$ are generic, the degree of 
$\mathcal{C}_{SDP}(C, \{A_i\}, b)$ is symmetric in the number of 
the linear equations defining \eqref{eq:SDP_primal}. Corollary \ref{cor:SDP-polynomial} concludes that in this case the degree of the central curve is a polynomial in $m$ of degree $d$.  This theorem and the two corollaries complete the work started in \cite{Rhodes}, proving Conjectures 4.3 and 4.4 in the same work. 

In the remainder of Section \ref{sec:SDP} we will report our observations on the arithmetic genus of $\mathcal{C}_{SDP}(C, \{A_i\}, b)$. We will also discuss semidefinite programs and the degree of their central curves associated to sum of squares (SOS) polynomials. In Section \ref{sec:LP} we will revisit the degree of the central curve of the linear program 
\eqref{eq:LP_primal} when $A, c$, and $b$ are generic. Besides relating this degree to the ML degree of linear concentration models generated
by diagonal matrices, in Theorem \ref{LP-volume} we will provide a different proof that this degree is equal to $\binom{m-1}{d}$. Section \ref{sec:QP} extends this result and its proof technique to convex
quadratic programs with linear constraints. Theorem \ref{QP-volume} bounds the degree of the central curve of such programs when the objective function and the constraints are generic.

\section{Semidefinite Programs and Linear Concentration Models}
\label{sec:SDP}

In this section we consider the central curve $\mathcal{C}_{SDP}(C,\{A_i\},b)$ when $C$ and $b$ are generic. 
In what follows, we describe the degree of this curve as the 
ML degree of a linear concentration model.  When $\{A_i\}$ are
also generic, we denote $\deg \left(\mathcal{C}_{SDP}(C, \{A_i\},b) \right)$ by $\psi_{SDP}(m,d)$. 

\subsection{Linear concentration models and  degree of the central curve}

Let $\mathcal{L}$ be a linear subspace of $\mathcal{S}^m_\R$ spanned by $d$ linearly independent symmetric matrices $\{K_1,\ldots ,K_d\}$. A linear concentration model is the set 
\[\mathcal{L}_{\succeq 0}^{-1}\coloneqq \{\Sigma \in \mathcal{S}_{\succeq 0}^m \, : \, \Sigma^{-1}\in\mathcal{L} \} \]
where $\mathcal{S}_{\succeq 0}^m$ is the cone of positive semidefinite matrices. Every matrix $\Sigma$ in $\mathcal{L}_{\succeq 0}^{-1}$ is the covariance matrix of a multivariate normal distribution on $\R^m$, and the elements of $\mathcal{L}$ are concentration matrices.

Given a sample covariance matrix $S$, the maximum likelihood estimate $\hat{K}$ of $S$ with respect to the linear concentration model defined by $\mathcal{L}$ is the unique positive semidefinite solution to the zero-dimensional polynomial equations 
\begin{equation}\label{eq:MLE}
    \Sigma K=Id_m, \qquad K\in\mathcal{L}, \qquad \Sigma -S\in \mathcal{L}^\perp.  
\end{equation}
The ML degree of this linear concentration model is defined as the number of solutions to \eqref{eq:MLE} in $\mathcal{S}_\C^m$. 

In~\cite{SU10} it was proven that when the matrices $K_1,\ldots ,K_d$ are generic, the ML degree of the linear concentration model is precisely the degree of the reciprocal variety $\mathcal{L}^{-1}$. 

\begin{thm}\cite[Theorem 2.3]{SU10}\label{thm:ML_degree}
    The ML degree $\phi(m,d)$ of a linear concentration model defined by a generic linear subspace $\mathcal{L}$ of dimension $d$ in $\mathcal{S}^m$ equals the degree of the projective variety $\mathcal{L}^{-1}$. This degree further  satisfies 
    \[\phi(m,d)=\phi\left( m, {m+1\choose 2} +1-d\right).\]
\end{thm}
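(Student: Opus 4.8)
The plan is to prove the two assertions separately, with the symmetry emerging from the birational involution $\Sigma\mapsto\Sigma^{-1}$ of $\mathcal{S}^m_\C$; throughout set $N=\binom{m+1}{2}=\dim\mathcal{S}^m$. For the first assertion I would rewrite the likelihood equations \eqref{eq:MLE}: a pair $(\Sigma,K)$ is a solution exactly when $\Sigma$ is invertible, $\Sigma^{-1}\in\mathcal{L}$, and $\Sigma\in S+\mathcal{L}^{\perp}$, in which case $K=\Sigma^{-1}$ is forced, so the ML degree counts such $\Sigma$. Matrix inversion is a birational involution of $\mathcal{S}^m_\C\cong\C^N$ restricting to an isomorphism of the open set of invertible matrices, so $\mathcal{L}^{-1}$ is irreducible of dimension $d$, its invertible points are exactly the $\Sigma$ with $\Sigma^{-1}\in\mathcal{L}$, and $\mathcal{L}^{-1}\cap\{\det=0\}$ is a proper subvariety of dimension at most $d-1$. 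The affine-linear space $S+\mathcal{L}^{\perp}$ has codimension $d$; for generic $S$ it misses that proper subvariety, so the solution set of \eqref{eq:MLE} is exactly $\mathcal{L}^{-1}\cap(S+\mathcal{L}^{\perp})$. A Bertini/Kleiman transversality argument then shows that for generic $\mathcal{L}$ and $S$ this affine-linear space meets $\mathcal{L}^{-1}$ transversally and at no point at infinity, hence in precisely $\deg\mathcal{L}^{-1}$ reduced points, giving $\phi(m,d)=\deg\mathcal{L}^{-1}$.

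For the symmetry I would pass to $\mathbb{P}^{N-1}=\mathbb{P}(\mathcal{S}^m_\C)$ and use the birational involution $\mathrm{cr}$ defined by $[\Sigma]\mapsto[\operatorname{adj}(\Sigma)]$, which equals $[\Sigma]\mapsto[\Sigma^{-1}]$ wherever $\Sigma$ is invertible (it is an involution since $\operatorname{adj}(\operatorname{adj}(\Sigma))=(\det\Sigma)^{m-2}\Sigma$) and has indeterminacy locus $\{\operatorname{rank}\le m-2\}$. Fix $(\mathcal{L},\mathcal{M})$ generic in $\mathrm{Gr}(d,N)\times\mathrm{Gr}(N+1-d,N)$. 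The cone $\mathcal{L}^{-1}$ determines a projective variety $\mathbb{P}(\mathcal{L}^{-1})$ of dimension $d-1$, and $\mathbb{P}(\mathcal{M})$ has complementary dimension $N-d$, so they meet in exactly $\deg\mathcal{L}^{-1}=\phi(m,d)$ points; for generic $(\mathcal{L},\mathcal{M})$ these points avoid $\{\det=0\}$ and hence the indeterminacy of $\mathrm{cr}$. Since $\mathrm{cr}$ is a birational involution with $\mathrm{cr}(\mathbb{P}(\mathcal{L}^{-1}))=\mathbb{P}(\mathcal{L})$ and $\mathrm{cr}(\mathbb{P}(\mathcal{M}))=\mathbb{P}(\mathcal{M}^{-1})$, it carries these points bijectively onto the points of $\mathbb{P}(\mathcal{L})\cap\mathbb{P}(\mathcal{M}^{-1})$, which is itself a generic linear section of a variety of dimension $N-d$ by a subspace of complementary dimension $d-1$ and so has $\deg\mathcal{M}^{-1}=\phi(m,N+1-d)$ points. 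Equating the two counts gives $\phi(m,d)=\phi(m,N+1-d)$; as a sanity check, both sides equal $1$ when $d=1$ or $d=N$.

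The main obstacle is the genericity bookkeeping that upgrades these point-counts to honest degrees. In the first part one must verify that $S+\mathcal{L}^{\perp}$ --- whose direction space $\mathcal{L}^{\perp}$ is pinned down by $\mathcal{L}$ rather than chosen freely --- is nonetheless transverse to $\mathcal{L}^{-1}$ and disjoint from its locus at infinity; this is a Zariski-open condition on $\mathcal{L}$, to be confirmed by a dimension count in the incidence variety $\overline{\{(\mathcal{L},\Sigma):\Sigma^{-1}\in\mathcal{L}\}}$. In the symmetry argument the delicate steps are that a single generic choice of $(\mathcal{L},\mathcal{M})$ can be made to satisfy all the required linear-section and transversality conditions at once (a finite intersection of dense opens in $\mathrm{Gr}(d,N)\times\mathrm{Gr}(N+1-d,N)$), and that the $\deg\mathcal{L}^{-1}$ intersection points genuinely avoid $\{\operatorname{rank}\le m-2\}$, so that $\mathrm{cr}$ induces a true bijection and not merely a correspondence of the correct degree. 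Both are standard Bertini/Kleiman-type verifications, but they are where the care is needed.
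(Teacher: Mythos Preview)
This theorem is not proved in the present paper at all: it is quoted verbatim from \cite{SU10}, and the only information the paper gives about the original argument is the remark (in the arithmetic-genus subsection) that ``the proof of Theorem~\ref{thm:ML_degree} \dots\ relies on the fact that $I_{\mathcal{L}_{\{A_i\},C}^{-1}}$ is Cohen--Macaulay when $\{A_i\}$ and $C$ are generic \cite{HVV85, Kot91}.'' So your proposal should be compared to that, not to anything in this paper.

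Your outline is a genuinely different route from the Sturmfels--Uhler argument. They use the deep algebraic input that the reciprocal variety is arithmetically Cohen--Macaulay for generic $\mathcal{L}$; this is exactly what guarantees that slicing $\mathcal{L}^{-1}$ by the \emph{particular} affine space $S+\mathcal{L}^{\perp}$ (whose direction is pinned down by $\mathcal{L}$, as you correctly flag) yields a zero-dimensional scheme of length $\deg\mathcal{L}^{-1}$, with no length escaping to infinity. You instead propose to bypass Cohen--Macaulayness via Bertini/Kleiman and an incidence-variety dimension count. The symmetry half of your plan, using the adjugate Cremona involution to swap $\mathbb{P}(\mathcal{L}^{-1})\cap\mathbb{P}(\mathcal{M})$ with $\mathbb{P}(\mathcal{L})\cap\mathbb{P}(\mathcal{M}^{-1})$, is clean and essentially the standard picture. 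The first half is where the two approaches really diverge: the obstacle you name --- showing that $S+\mathcal{L}^{\perp}$ meets $\mathcal{L}^{-1}$ transversally with no points at infinity even though $\mathcal{L}^{\perp}$ is not freely chosen --- is precisely the step the Cohen--Macaulay property is invoked to settle in \cite{SU10}. A helpful elementary observation in your direction is that $\langle\Sigma,\Sigma^{-1}\rangle=\operatorname{tr}(I_m)=m\neq 0$, so no \emph{invertible} point of $\mathcal{L}^{-1}$ lies in $\mathcal{L}^{\perp}$; the real content, however, is controlling the boundary $\mathcal{L}^{-1}\cap\{\det=0\}$, and your proposed incidence count would have to show that, for generic $\mathcal{L}$, this boundary misses $\mathcal{L}^{\perp}$. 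If that can be carried out directly it would give a more elementary proof than the one cited; otherwise you are rediscovering the need for the Cohen--Macaulay input.
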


Now we are ready to prove our main theorem.

\begin{thm}\label{SDP-theorem}
Given an SDP as in~\eqref{eq:SDP_primal} with $C$ and $b$ generic, 
$\deg\left( \mathcal{C}_{SDP}(C,\{A_i\}, b) \right)$ is equal to
 the ML degree of the linear concentration model generated
 by $\mathcal{L} = \spn \{C, A_1, \ldots, A_d\}$.
If in addition $A_1,\ldots , A_d$ are generic, $\psi_{SDP}(m,d)$ is equal to the degree of $\mathcal{L}^{-1}$, and hence $\psi_{SDP}(m,d)=\phi(m,d+1)$.
\end{thm}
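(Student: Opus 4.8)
The plan is to show that the central curve $\mathcal{C}_{SDP}(C,\{A_i\},b)$ and the variety cut out by the ML equations \eqref{eq:MLE} coincide up to the obvious correspondence, so that their degrees match, and then to invoke Theorem~\ref{thm:ML_degree} for the generic case. The starting point is the observation that the KKT system \eqref{eq:KKT_conditions_SDP} can be rewritten by substituting $\Sigma = \frac{1}{\lambda}X$ and absorbing $C$ into the span: the first equation says $C - \lambda X^{-1} - \sum y_i A_i = 0$, i.e. $\lambda X^{-1} = C - \sum y_i A_i \in \mathcal{L} = \spn\{C, A_1,\ldots,A_d\}$. Thus $X^{-1}$ (equivalently $\Sigma^{-1}$) ranges over the line $\{ \frac{1}{\lambda}(C - \sum y_i A_i)\}$ inside $\mathcal{L}$, while the affine constraints $\langle A_i, X\rangle = b_i$ are exactly the inhomogeneous conditions $\Sigma - S \in \mathcal{L}^\perp$ for a suitable $S$ chosen so that $\langle A_i, S\rangle = b_i$ (here one uses that $b$ is generic so such an $S$ exists and that the $A_i$ are linearly independent). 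So the Zariski closure of $\{X^*(\lambda)\}$ is, after scaling, the same curve as the one swept out by the MLE as the "data" moves along a line transverse to $\mathcal{L}^\perp$ — but we want it for \emph{fixed} data and varying $\lambda$.

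The key step is therefore to set up the dictionary carefully: project the incidence variety $\{(X,y,\lambda)\}$ defined by \eqref{eq:KKT_conditions_SDP} onto $\mathcal{S}_\C^m$, show this projection is birational onto its image (the extra variables $y_i$ and $\lambda$ are recovered rationally from $X$ via $\lambda X^{-1} + \sum y_i A_i = C$ and the linear independence of $\{C,A_i\}$ — here genericity of $C$ matters so that $C \notin \spn\{A_i\}$ and $\mathcal{L}$ has dimension $d+1$), and then identify this image with the reciprocal-type variety. Concretely, one shows that $X$ lies on $\mathcal{C}_{SDP}$ if and only if $X^{-1} \in \mathcal{L}$ \emph{and} $X$ satisfies the $d$ affine linear equations $\langle A_i, X \rangle = b_i$. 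The first condition places $X$ on the reciprocal variety $\mathcal{L}^{-1}$; intersecting with $d$ generic hyperplanes cuts down to a curve, and for a generic complete intersection the degree of the curve equals the degree of $\mathcal{L}^{-1}$ (which is a variety of dimension $d$, since $\mathcal{L}$ has dimension $d+1$ and inversion is a birational map $\mathbb{P}(\mathcal{S}^m) \dashrightarrow \mathbb{P}(\mathcal{S}^m)$ — compare the LP case in \cite{DSV12}). This matches exactly the ML-degree characterization \eqref{eq:MLE}: $K \in \mathcal{L}$, $\Sigma K = Id_m$, and $\Sigma - S \in \mathcal{L}^\perp$, with $S$ encoding $b$; so the number of complex solutions of the MLE problem equals the number of points in which the line (parametrized by $\lambda$) meets the central curve's defining equations, i.e. the degree.

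For the second assertion, once $\{A_1,\ldots,A_d\}$ is also generic, $\mathcal{L} = \spn\{C,A_1,\ldots,A_d\}$ is a generic $(d+1)$-dimensional subspace of $\mathcal{S}^m$, so Theorem~\ref{thm:ML_degree} applies verbatim: the ML degree $\phi(m,d+1)$ equals $\deg \mathcal{L}^{-1}$, giving $\psi_{SDP}(m,d) = \phi(m,d+1) = \deg \mathcal{L}^{-1}$. One small point to check is that genericity of $(C,A_1,\ldots,A_d)$ as data for the SDP indeed translates into genericity of the $(d+1)$-plane they span — this is immediate since a generic choice of $d+1$ matrices spans a generic subspace — and that the hyperplane sections $\langle A_i, X\rangle = b_i$ are generic relative to $\mathcal{L}^{-1}$, which follows from genericity of $b$.

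I expect the main obstacle to be the birationality/degree-preservation step: one must rule out that the projection onto $\mathcal{S}_\C^m$ collapses components or drops degree, and one must ensure the curve is reduced and irreducible of the expected degree (no excess intersection with the hyperplane at infinity or with the non-invertible locus). Handling this cleanly requires knowing that $\mathcal{L}^{-1}$ meets $d$ generic hyperplanes properly and transversally away from the locus where $X$ is singular — essentially a Bertini-type argument combined with the structure already developed for reciprocal varieties in \cite{SU10} and \cite{DSV12}. The rest — the algebraic rewriting of KKT into MLE form and the recovery of $(\lambda, y)$ from $X$ — is bookkeeping that genericity of $C$ and $b$ makes routine.
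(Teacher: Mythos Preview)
Your overall strategy is the same as the paper's, but there is a genuine off-by-one muddle that hides the key step and leaves the first assertion (non-generic $A_i$) unproved.

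You write that ``the affine constraints $\langle A_i, X\rangle = b_i$ are exactly the inhomogeneous conditions $\Sigma - S \in \mathcal{L}^\perp$.'' This is not right: $\mathcal{L} = \spn\{C,A_1,\dots,A_d\}$ has dimension $d+1$, so $\Sigma - S \in \mathcal{L}^\perp$ imposes $d+1$ linear conditions, while the KKT constraints only give the $d$ conditions $\langle A_i,X\rangle = b_i$, i.e.\ $X-S \in \spn\{A_i\}^\perp$. That is why the central curve is a \emph{curve}: it is $\mathcal{L}^{-1}$ cut by $d$ (not $d+1$) affine hyperplanes. The MLE system \eqref{eq:MLE}, by contrast, is zero-dimensional. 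So your claimed identification of the two systems is off by one hyperplane, and your sentence about ``the line (parametrized by $\lambda$)'' does not supply the missing equation.

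The paper closes this gap with a single clean move that your proposal circles around but never lands on: to compute $\deg(\mathcal{C}_{SDP})$ one intersects the curve with one further \emph{generic} hyperplane $\langle B,X\rangle = b_{d+1}$, and since $C$ is generic one may take $B=C$. Now the full system reads $X^{-1}\in\mathcal{L}$ together with $\langle A_i,X\rangle=b_i$ and $\langle C,X\rangle=b_{d+1}$, which is exactly $X-S\in\mathcal{L}^\perp$ for $S$ with $\langle A_i,S\rangle=b_i$, $\langle C,S\rangle=b_{d+1}$; genericity of $b,b_{d+1}$ makes $S$ generic. This is \eqref{eq:MLE} on the nose, and no Bertini or birationality argument is needed to get the ML degree for arbitrary $A_i$. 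Your detour through ``degree of the curve equals $\deg\mathcal{L}^{-1}$ for a generic complete intersection'' would only recover the second assertion (generic $A_i$), because the slicing hyperplanes $\langle A_i,X\rangle=b_i$ use the very matrices spanning $\mathcal{L}$ and are not generic relative to $\mathcal{L}^{-1}$ in the non-generic case. Once you insert the $B=C$ step, the rest of your write-up (recovering $(\lambda,y)$ from $X$, invoking Theorem~\ref{thm:ML_degree} for the generic case) is fine, and the birationality discussion becomes unnecessary.
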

\begin{proof}
By definition  $$\deg(\mathcal{C}_{SDP}(C,\{A_i\},b))=\left| \mathcal{C}_{SDP}(C,\{A_i\},b) \cap \mathcal{H}\right|$$ where $\mathcal{H}$ is a generic hyperplane in $\mathcal{S}_\C^m$. 
Using the KKT conditions \eqref{eq:KKT_conditions_SDP}, the equations defining $\mathcal{C}_{SDP}(C,\{A_i\},b) \cap \mathcal{H}$ are 
\begin{equation}\label{eq:SDP_degree}
    \begin{split}
        &X^{-1}=\frac{1}{\lambda}C-\frac{1}{\lambda}\Sigma _{i=1}^d y_iA_i \\
        &\langle A_i,X\rangle -b_i=0, \quad i=1,\ldots ,d \\
        &\langle B,X \rangle -b_{d+1}=0,
    \end{split}
\end{equation}
for some generic $B\in\mathcal{S}_\C^m$ and $b_{d+1}\in\C$.

The first equation in~\eqref{eq:SDP_degree} means that $X^{-1}\in\mathcal{L}$, where $\mathcal{L}=\spn\{C,A_1,\ldots ,A_d\}$. Since $C$ is generic, in the last equation of~\eqref{eq:SDP_degree} we can take $B=C$. Additionally, if we define $S$ as a matrix such that $\langle A_i,S\rangle =b_i$, for $i=1,\ldots ,d$, and $\langle C,S\rangle =b_{d+1}$, the last $d+1$ equations in~\eqref{eq:SDP_degree} mean that $X-S \in \mathcal{L}^\perp$. Note that these are precisely the likelihood equations of the linear concentration model determined by $\mathcal{L}$. This proves that $\deg(\mathcal{C}_{SDP}(C,\{A_i\},b))$ is equal to the ML degree of
the linear concentration model defined by $\mathcal{L}$.
Additionally, if $A_1,\ldots ,A_d$ are generic, Theorem~\ref{thm:ML_degree} guarantees that $\phi(m,d+1)$ coincides with the degree of $\mathcal{L}^{-1}$, which means that $\psi(m,d)$ is equal to the degree of $\mathcal{L}^{-1}$ as well.

\end{proof}
\begin{cor} \label{cor:SDP-symmetry}
The degree of the central curve for a generic SDP satisfies
$$\psi_{SDP}(m,d)  = \psi_{SDP}\left( m, {m+1 \choose 2} - d - 1 \right).$$
\end{cor}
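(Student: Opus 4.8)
The plan is to obtain the claimed symmetry purely by chaining the two identities already in hand: the dictionary between the central-curve degree and the ML degree supplied by Theorem~\ref{SDP-theorem}, and the intrinsic duality of the ML degree of a generic linear concentration model recorded in Theorem~\ref{thm:ML_degree}. In this sense the corollary is a bookkeeping consequence rather than a new piece of geometry, so the task is mostly to line up the shifts in the dimension parameter correctly.

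First I would use Theorem~\ref{SDP-theorem} in the fully generic case: when $C, A_1, \ldots, A_d$ and $b$ are generic, the span $\mathcal{L} = \spn\{C, A_1, \ldots, A_d\}$ is a generic $(d+1)$-dimensional linear subspace of $\mathcal{S}^m$, and
$\psi_{SDP}(m,d) = \phi(m, d+1)$.
Next I would invoke the symmetry half of Theorem~\ref{thm:ML_degree}, namely $\phi(m,k) = \phi\bigl(m, \binom{m+1}{2} + 1 - k\bigr)$, applied with $k = d+1$; this gives $\phi(m, d+1) = \phi\bigl(m, \binom{m+1}{2} - d\bigr)$. Finally I would read the right-hand side backwards through Theorem~\ref{SDP-theorem}: putting $d' = \binom{m+1}{2} - d - 1$ we have $d' + 1 = \binom{m+1}{2} - d$, so $\phi\bigl(m, \binom{m+1}{2} - d\bigr) = \phi(m, d'+1) = \psi_{SDP}(m, d')$. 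Concatenating the three equalities yields
$\psi_{SDP}(m,d) = \psi_{SDP}\bigl(m, \binom{m+1}{2} - d - 1\bigr)$,
which is the assertion.

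The only thing requiring a word of care is that all the parameters stay in their admissible ranges: the number of constraints $d$ should satisfy $1 \le d \le \binom{m+1}{2} - 1$ (equivalently $2 \le d+1 \le \binom{m+1}{2}$), under which both $\phi(m,d+1)$ and $\phi\bigl(m,\binom{m+1}{2}-d\bigr)$ are ML degrees of honest subspaces and $d' = \binom{m+1}{2}-d-1$ is again a legitimate number of constraints, so that $\psi_{SDP}(m,d')$ and its genericity hypotheses make sense. One should also note explicitly that genericity of $C$ together with genericity of the $A_i$ makes $\mathcal{L}$ a generic subspace of the right dimension, which is exactly what is needed to apply Theorem~\ref{thm:ML_degree}. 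I do not expect any genuine obstacle beyond this indexing check.
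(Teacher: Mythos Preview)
Your proposal is correct and follows exactly the same chain of identities as the paper's own proof: apply $\psi_{SDP}(m,d)=\phi(m,d+1)$ from Theorem~\ref{SDP-theorem}, then the duality $\phi(m,d+1)=\phi\bigl(m,\binom{m+1}{2}-d\bigr)$ from Theorem~\ref{thm:ML_degree}, and finally translate back via Theorem~\ref{SDP-theorem} with $d'=\binom{m+1}{2}-d-1$. The extra remarks you add about admissible parameter ranges and genericity of $\mathcal{L}$ are sound but not needed beyond what the paper assumes.
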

\begin{proof}
\begin{align*}
    \psi_{SDP}(m,d)  &= \phi(m,d+1)\\
    &= \phi(m,\binom{m+1}{2}+1-(d+1))\\
    &= \phi(m,\binom{m+1}{2}-d))\\
    &=\psi_{SDP}\left( m, {m+1 \choose 2} - d -1 \right).
\end{align*}
\end{proof}

\begin{cor} \label{cor:SDP-polynomial}
$\psi_{SDP}(m,d)$ is a polynomial in $m$ of degree $d$.
\end{cor}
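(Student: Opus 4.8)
The plan is to move the problem to the intersection theory of the space of complete quadrics, where polynomiality in $m$ is available, and then to read off the degree. By Theorem~\ref{SDP-theorem} we have $\psi_{SDP}(m,d)=\phi(m,d+1)=\deg\mathcal{L}^{-1}$ for a generic linear subspace $\mathcal{L}\subseteq\mathcal{S}^m_\C$ of dimension $d+1$, so it suffices to understand how $\deg\mathcal{L}^{-1}$ behaves as $m$ grows with $\dim\mathcal{L}$ held fixed. First I would realize this degree as an intersection number on the space $CQ_m$ of complete quadrics over $\mathbb{P}^N$, $N=\binom{m+1}{2}-1$: the adjugate involution $\iota$ on $\mathbb{P}^N$ is resolved by the two structure morphisms $f,g\colon CQ_m\to\mathbb{P}^N$, and writing $h=f^*\mathcal{O}(1)$, $h'=g^*\mathcal{O}(1)$, the cycle $f^{-1}(\mathbb{P}(\mathcal{L}))$ has class $h^{\,N-d}$, has dimension $d$, and maps birationally onto $\mathcal{L}^{-1}$ under $g$ (because $g$ agrees with $\iota\circ f$ over the locus where $\iota$ is defined, and $\iota$ restricts to an isomorphism there). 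Intersecting with $d$ further pullbacks of a general hyperplane by $g$ then gives $\psi_{SDP}(m,d)=\int_{CQ_m} h^{\,N-d}\,(h')^{\,d}$.

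The decisive point is then a statement purely internal to complete quadrics: for a fixed exponent $k$, the intersection product $\int_{CQ_m} h^{\,N-k}(h')^{\,k}$ is a polynomial in $m$ of degree $k$. This is exactly the content of the recent results on the Chow ring of $CQ_m$ referred to in the introduction, where such products are evaluated through the divisor classes generating $\mathrm{Pic}(CQ_m)$ by a formula whose $m$-dependence is manifestly polynomial and whose top-degree term is pinned down and shown to be nonzero. Taking $k=d$ yields that $\psi_{SDP}(m,d)$ is a polynomial in $m$ of degree $d$. (A weaker bound $\deg\le d$ is elementary: the adjugate map is given by forms of degree $m-1$, so the image of a $\mathbb{P}^{d}$ under it has degree at most $(m-1)^{d}$; but this gives neither polynomiality nor the exact degree.)

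The main obstacle is precisely this polynomiality-with-leading-term statement for the complete-quadric intersection numbers, and the efficient route is to cite it rather than reprove it. A more self-contained alternative would argue by induction on $k$, using the stratification of $CQ_m$ whose boundary divisors are, up to finite maps, products of smaller spaces $CQ_{m'}$ with Grassmannians, together with the projection formula to express $\int_{CQ_m} h^{\,N-k}(h')^{\,k}$ in terms of analogous numbers on those lower-dimensional strata; carrying this through with care would amount to reconstructing the cited result, so in practice one invokes it directly and the corollary follows.
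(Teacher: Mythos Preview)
Your proposal is correct and follows essentially the same route as the paper: both reduce to $\psi_{SDP}(m,d)=\phi(m,d+1)$ via Theorem~\ref{SDP-theorem} and then invoke the polynomiality of the complete-quadrics intersection numbers established by Micha\l{}ek--Monin--Wi\'{s}niewski, Manivel--Seynnaeve--Vodi\v{c}ka, and Cid-Ruiz. You simply supply more of the intersection-theoretic setup (the identification $\psi_{SDP}(m,d)=\int_{CQ_m}h^{N-d}(h')^{d}$) that the paper leaves implicit in its citations, but the substance and the external input are identical.
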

\begin{proof}
This result follows from the work of Micha\l{}ek, Monin, Wi\'{s}niewski, Manivel, Seynnaeve, and Vodi\v{c}ka
who employed the space of complete quadrics and intersection theory
to prove the polynomiality of $\phi(m,d)$ (\cite{MMW2020} and \cite[Theorem 1.3]{MMMSV2020})
and from the seperate work of Cid-Ruiz \cite[Corollary C]{C2020}.
\end{proof}


\subsection{Arithmetic Genus}

The ideal of polynomials $I_{\mathcal{L}_{\{A_i\},C}^{-1}}$ in $\C[x_{ij} \, : \, 1 \leq i \leq j \leq m]$ vanishing on the reciprocal variety $\mathcal{L}_{\{A_i\},C}^{-1}$ is a prime ideal since this variety is irreducible.
The proof of Theorem \ref{thm:ML_degree}  (see \cite[Theorem 2.3]{SU10})
relies on the fact that $I_{\mathcal{L}_{\{A_i\},C}^{-1}}$ is Cohen-Macaulay  when $\{A_i\}$ and $C$ are generic \cite{HVV85, Kot91}.
Since the central curve $\mathcal{C}_{SDP}(C, \{A_i\}, b)$ is obtained
from intersecting the reciprocal variety with $d$ generic linear equations in \eqref{eq:KKT_conditions_SDP}, the numerator
of the Hilbert series of $I_{\mathcal{L}_{\{A_i\},C}^{-1}}$
and that of the defining ideal of the the central curve are identical.
The Hilbert series for the central curve will be of the form 
$$\frac{h_0 + h_1t + h_2t^2 +\cdots + h_kt^k}{(1-t)^2}$$
where the coefficients $h_j$ are nonnegative integers with $h_0 = 1$ and $h_k \neq 0$.
The arithmetic genus of the central curve can be calculated as
$$\mathrm{genus}(m,d) := \mathrm{genus}(\mathcal{C}_{SDP}(C,\{A_i\}, b)) = 1 - \sum_{j=0}^k(1-j)h_j.$$
The following table shows $\mathrm{genus}(m,d)$ for all values 
we can compute with Macaulay2 \cite{M2} and/or using the two propositions that follow. 
%
%

$$
\begin{array}{ c| c c c c c c c c c c c c c c}
 m \backslash \, d & 1 & 2 & 3 & 4 & 5 & 6 & 7 & 8 & 9 & 10& 11 & 12 & 13 & 14 \\ \hline
 2 & 0  & 0  &  &  &\\  
 3 & 0  & 0  & 1 & 0 &  0\\
 4 &  0 &  1 & 10 & 20 & 22 & 20 & 10 & 1 & 0 \\
 5 &  0 & 3 &  &  &  &  &  &  &  &  & & 33  & 3 & 0
\end{array}
$$
\begin{proposition} For $m \geq 2$, 
$$\mathrm{genus}(m,1) = \mathrm{genus}\left(m, \binom{m+1}{2} - 1\right) = 0.$$
In these cases, the central curve is a rational curve. Furthermore,
when $d=1$ the numerator of the Hilbert series is $1+(m-2)t$, and 
when $d = \binom{m+1}{2} - 1$ it is $1$.
\end{proposition}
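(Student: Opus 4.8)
The plan is to handle the two extreme values $d=1$ and $d=\binom{m+1}{2}-1$ separately; in each the central curve degenerates to a projective curve whose homogeneous coordinate ring can be written down by hand, so that the numerator of the Hilbert series, the genus, and rationality all follow at once.

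\emph{The case $d=\binom{m+1}{2}-1$.} Here $\mathcal{L}=\spn\{C,A_1,\dots,A_d\}$ has $d+1=\binom{m+1}{2}=\dim_\C\mathcal{S}^m_\C$ generators, so for generic data $\mathcal{L}=\mathcal{S}^m_\C$; the constraint $X^{-1}\in\mathcal{L}$ in \eqref{eq:KKT_conditions_SDP} is then vacuous and $\mathcal{C}_{SDP}(C,\{A_i\},b)$ equals the affine line $L=\{X\in\mathcal{S}^m_\C:\langle A_i,X\rangle=b_i,\ i=1,\dots,d\}$, the $A_i$ being linearly independent. (Consistently, $\psi_{SDP}(m,\binom{m+1}{2}-1)=\phi(m,\binom{m+1}{2})=\phi(m,1)=1$ by Theorems~\ref{SDP-theorem} and~\ref{thm:ML_degree}, and an irreducible reduced curve of degree $1$ is a line.) The projective closure of $L$ is a line in $\mathbb{P}^N$: a smooth rational curve with homogeneous coordinate ring $\C[u,v]$ and Hilbert series $(1-t)^{-2}$, so the numerator is $1$ and $\mathrm{genus}=1-(1-0)\cdot 1=0$.

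\emph{The case $d=1$.} With $\mathcal{L}=\spn\{C,A_1\}$, eliminating $\lambda$ and $y_1$ from \eqref{eq:KKT_conditions_SDP} yields the rational parametrization
$$X(y_1)=\frac{b_1\,\mathrm{adj}(C-y_1A_1)}{\langle A_1,\,\mathrm{adj}(C-y_1A_1)\rangle},$$
so $\mathcal{C}_{SDP}(C,\{A_1\},b)$ is the closure of the image of a rational map from $\mathbb{A}^1$; in particular it is irreducible and rational. Homogenizing $y_1=t/s$ (the factor $s^{-(m-1)}$ produced by the adjugate cancels in the ratio), the projective closure $\overline{\mathcal{C}}\subseteq\mathbb{P}^N$ is the image of the morphism $\mathbb{P}^1\to\mathbb{P}^N$ whose homogeneous components are the $\binom{m+1}{2}$ entries of $\mathrm{adj}(sC-tA_1)$ together with $\langle A_1,\mathrm{adj}(sC-tA_1)\rangle$ — binary forms of degree $m-1$, the last being a linear combination of the others; for a generic pencil $\mathrm{rank}(sC-tA_1)\ge m-1$ everywhere, so $\mathrm{adj}(sC-tA_1)$ never vanishes and this really is a morphism. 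Thus the parametrizing linear system equals the span $V$ of the $(m-1)\times(m-1)$ minors of $sC-tA_1$ inside the $m$-dimensional space of binary forms of degree $m-1$. I would then show $V$ is the whole space for generic $C,A_1$: the condition $\dim V=m$ is Zariski-open in $(C,A_1)$ and holds for the diagonal pencil $C=\mathrm{Id}_m$, $A_1=\mathrm{diag}(\mu_1,\dots,\mu_m)$ with distinct $\mu_i$, whose nonzero minors are $\prod_{j\neq i}(s-t\mu_j)$, $i=1,\dots,m$, linearly independent because their value matrix at $(s,t)=(\mu_k,1)$ is diagonal and invertible (alternatively one diagonalizes the generic pencil simultaneously and reads off the same conclusion). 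Granting this, the parametrizing system is the complete very ample system $|\mathcal{O}_{\mathbb{P}^1}(m-1)|$, so $\overline{\mathcal{C}}$ is a rational normal curve of degree $m-1$ spanning a linear $\mathbb{P}^{m-1}\subseteq\mathbb{P}^N$. Its homogeneous coordinate ring therefore has Hilbert function $k\mapsto(m-1)k+1$ and Hilbert series $\frac{1+(m-2)t}{(1-t)^2}$: the numerator is $1+(m-2)t$ and $\mathrm{genus}=1-\big((1-0)\cdot1+(1-1)(m-2)\big)=0$. This agrees with Theorem~\ref{SDP-theorem} ($\psi_{SDP}(m,1)=\phi(m,2)=m-1$) and with the Hilbert-series comparison recorded above, $\mathcal{L}_{\{A_1\},C}^{-1}$ being the affine cone over a rational normal curve of the same degree.

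The only real content is the genericity claim that the maximal minors of a generic symmetric $m\times m$ pencil span all binary forms of degree $m-1$; I expect this — elementary though it is, via the diagonal test pencil and semicontinuity — to be the main obstacle. A second, routine matter is the bookkeeping relating the paper's notions of $\deg$ and $\mathrm{genus}$ (hyperplane sections, and the $h$-vector of the defining ideal in $\C[x_{ij}]$) to the projective invariants of $\overline{\mathcal{C}}\subseteq\mathbb{P}^N$: a generic affine hyperplane avoids the finitely many points of $\overline{\mathcal{C}}$ at infinity and so computes its projective degree, and since $\overline{\mathcal{C}}$ lies in a $\mathbb{P}^{m-1}$ along which the remaining homogeneous coordinates vanish, its coordinate ring in $\mathbb{P}^N$ coincides with that of the rational normal curve.
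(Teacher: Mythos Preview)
Your argument is correct and essentially parallel to the paper's, but with a noteworthy shift of viewpoint. For $d=\binom{m+1}{2}-1$ both proofs amount to observing that the relevant object is all of projective space, hence the curve is a line; the paper says this for the reciprocal variety $\mathcal{L}^{-1}=\mathbb{P}^d$, you say it directly for the central curve. For $d=1$, the paper computes the Hilbert series of the \emph{reciprocal variety} as the image of $\mathbb{P}^1$ under the $(m-1)$-minors map, and then invokes the Cohen--Macaulay machinery set up just before the proposition to transfer the $h$-polynomial to the central curve; you instead parametrize the \emph{central curve itself} via $X(y_1)=b_1\,\mathrm{adj}(C-y_1A_1)/\langle A_1,\mathrm{adj}(C-y_1A_1)\rangle$ and identify its projective closure as a rational normal curve of degree $m-1$, reading off the Hilbert series directly. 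Your route is more self-contained---it avoids the Cohen--Macaulay transfer and actually justifies (via the diagonal test pencil) the implicit claim that the minors span the full space of binary $(m-1)$-forms---at the price of some extra bookkeeping. The paper's route is shorter because it leans on the framework already established, but it leaves the step ``rational curve of degree $m-1$ $\Rightarrow$ numerator $1+(m-2)t$'' unexplained; your argument fills that in.
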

\begin{proof}
In the case $d = \binom{m+1}{2}-1$, the reciprocal variety is equal
to $\mathbb{P}^d$, and therefore the central curve is $\mathbb{P}^1$.
In the case $d=1$, the reciprocal variety is the image of $\spn \{C, A_1\} \simeq \mathbb{P}^1$ under the rational map given by the $(m-1)$-minors
of a generic $m \times m$ symmetric matrix. Hence it is a rational 
curve of degree $m-1$. This implies that the numerator of the Hilbert
series of the ideal defining the reciprocal variety, and therefore that
of the central curve, is $1 + (m-2)t$. This means that the central 
curve is also a rational curve, i.e., its genus is equal to zero.
\end{proof}
\begin{proposition} \label{prop:codim1-2-formula} For $m \geq 2$, 
$$ \mathrm{genus}\left(m, \binom{m+1}{2}-2\right) = \binom{m-2}{2} \mbox{ and } 
\mathrm{genus}\left(m, \binom{m+1}{2}-3\right) = 1 + (m-1)^2(m-3).$$
\end{proposition}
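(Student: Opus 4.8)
The plan is to reduce both formulas to a computation of the numerator $h(t)=h_0+h_1t+\cdots+h_kt^k$ of the Hilbert series of the reciprocal variety $\mathcal{L}^{-1}$, where $\mathcal{L}:=\mathcal{L}_{\{A_i\},C}$: by the discussion preceding the proposition this is also the numerator of the defining ideal of the central curve, and since $h_0=1$ the genus formula becomes
$$\mathrm{genus}(m,d)=1-\sum_{j=0}^{k}(1-j)h_j=1-h(1)+h'(1).$$
When $d=\binom{m+1}{2}-2$ the space $\mathcal{L}$ is a generic hyperplane $\{Y:\langle B,Y\rangle=0\}$ in $\mathcal{S}^m$, and when $d=\binom{m+1}{2}-3$ it is a generic codimension-two subspace $\{Y:\langle B_1,Y\rangle=\langle B_2,Y\rangle=0\}$. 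Using $X^{-1}=\mathrm{adj}(X)/\det(X)$, where $\mathrm{adj}(X)$ is the adjugate (so $X\,\mathrm{adj}(X)=\det(X)\,Id_m$ and its entries are the signed $(m-1)\times(m-1)$ minors of $X$), one sees that the form $f_{B_i}(X):=\langle B_i,\mathrm{adj}(X)\rangle$, of degree $m-1$, vanishes on $\mathcal{L}^{-1}$; hence $\mathcal{L}^{-1}\subseteq V(f_B)$, resp.\ $\mathcal{L}^{-1}\subseteq V(f_{B_1},f_{B_2})$.

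In the hyperplane case $\mathcal{L}^{-1}$ is the birational image of $\mathcal{L}$ under $X\mapsto X^{-1}$, hence irreducible of codimension one, i.e.\ a hypersurface, and $\deg\mathcal{L}^{-1}=\psi_{SDP}\left(m,\binom{m+1}{2}-2\right)=\phi(m,2)=m-1$ by Theorems~\ref{SDP-theorem} and~\ref{thm:ML_degree} together with the degree computed for $d=1$ in the preceding proposition. Since $f_B$ has degree $m-1$ and vanishes on the irreducible hypersurface $\mathcal{L}^{-1}$ of the same degree, $I_{\mathcal{L}^{-1}}=(f_B)$ and $h(t)=1+t+\cdots+t^{m-2}$; then $h(1)=m-1$, $h'(1)=\binom{m-1}{2}$, and
$$\mathrm{genus}\left(m,\binom{m+1}{2}-2\right)=1-(m-1)+\binom{m-1}{2}=\binom{m-2}{2}.$$

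In the codimension-two case the aim is to show that $f_{B_1},f_{B_2}$ is a regular sequence cutting out $\mathcal{L}^{-1}$ as a reduced scheme, whence $I_{\mathcal{L}^{-1}}=(f_{B_1},f_{B_2})$ is a complete intersection and $h(t)=(1+t+\cdots+t^{m-2})^2$; granting this, $h(1)=(m-1)^2$, $h'(1)=(m-1)^2(m-2)$, and
$$\mathrm{genus}\left(m,\binom{m+1}{2}-3\right)=1-(m-1)^2+(m-1)^2(m-2)=1+(m-1)^2(m-3).$$
I would prove the structural claim in three steps. (i) $f_{B_1},f_{B_2}$ is a regular sequence for generic $B_1,B_2$: $f_{B_1}\ne 0$, and since the $(m-1)\times(m-1)$ minors have greatest common divisor $1$ (their common zero locus $\{\mathrm{rank}\,X\le m-2\}$ has codimension three), a generic $f_{B_2}$ vanishes on no component of the fixed hypersurface $V(f_{B_1})$; hence $(f_{B_1},f_{B_2})$ is Cohen--Macaulay and unmixed of codimension two. (ii) The only codimension-two component of $V(f_{B_1},f_{B_2})$ is $\mathcal{L}^{-1}$: a component $W$ whose generic point is invertible satisfies $\langle B_i,X^{-1}\rangle=0$ on $W$, so $W\subseteq\mathcal{L}^{-1}$; a component $W\subseteq\{\det X=0\}$, stratified by the rank $r$ of its generic point, lies either (for $r=m-1$) in $\{X:\mathrm{rank}\,X=m-1,\ Xv=0,\ v^tB_1v=v^tB_2v=0\}$, swept out by the $(m-3)$-dimensional variety of admissible kernel lines $[v]$ and, over each, the $\binom{m}{2}$-dimensional space of symmetric $X$ with $Xv=0$, of dimension $\binom{m+1}{2}-3$, or (for $r\le m-2$) in $\{\mathrm{rank}\,X\le m-2\}$, again of dimension $\binom{m+1}{2}-3$; both are strictly smaller than $\dim\mathcal{L}^{-1}=\binom{m+1}{2}-2$, a contradiction, so $\sqrt{(f_{B_1},f_{B_2})}=I_{\mathcal{L}^{-1}}$. (iii) $(f_{B_1},f_{B_2})$ is generically reduced along $\mathcal{L}^{-1}$: at a generic (invertible) point $X_0$ one has $d f_{B_i}|_{X_0}=-\det(X_0)\,\langle X_0^{-1}B_iX_0^{-1},\,\cdot\,\rangle$ (the $d\det$ term vanishes because $\langle B_i,X_0^{-1}\rangle=0$), and these two functionals are linearly independent precisely when $B_1,B_2$ are, so $V(f_{B_1},f_{B_2})$ is smooth of codimension two at $X_0$. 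A Cohen--Macaulay scheme that is generically reduced is reduced, so (ii) and (iii) give $(f_{B_1},f_{B_2})=I_{\mathcal{L}^{-1}}$ and the claimed Hilbert series.

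The hyperplane case and the passage from each Hilbert-series numerator to the genus are routine. The crux, and the step I expect to be the main obstacle, is the codimension-two case, and within it part (ii): ruling out a spurious codimension-two component of $V(f_{B_1},f_{B_2})$ supported on the determinant hypersurface — the dimension count via rank stratification — since this is exactly what forces $(f_{B_1},f_{B_2})$ to be prime and thereby determines the Hilbert series.
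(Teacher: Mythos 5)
Your proposal is correct and follows the same computational route as the paper: both reduce the genus to the numerator $h(t)$ of the Hilbert series via $\mathrm{genus} = 1 - h(1) + h'(1)$, and both identify $h(t) = 1 + t + \cdots + t^{m-2}$ in the hyperplane case and $h(t) = (1+t+\cdots+t^{m-2})^2$ in the codimension-two case. The difference is in how those Hilbert series are justified. The paper simply asserts that in codimension one the reciprocal variety is a degree-$(m-1)$ hypersurface and in codimension two it is a complete intersection of two degree-$(m-1)$ forms, pointing to the literature ([HVV85], [Kot91]); you instead give a self-contained proof of the complete intersection structure by exhibiting the forms $f_{B_i}(X)=\langle B_i,\mathrm{adj}(X)\rangle$, showing they form a regular sequence, ruling out spurious codimension-two components supported on the determinant hypersurface via the rank stratification ($r=m-1$ forces $v^tB_iv=0$ on the kernel line, giving an $(m-3)+\binom{m}{2}=\binom{m+1}{2}-3$ dimensional locus; $r\le m-2$ lands in the codimension-three rank locus), and confirming generic reducedness by the Jacobian computation $df_{B_i}|_{X_0}=-\det(X_0)\langle X_0^{-1}B_iX_0^{-1},\cdot\rangle$. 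All three steps check out, and CM plus generically reduced does give reduced, so $(f_{B_1},f_{B_2})=I_{\mathcal{L}^{-1}}$. One small imprecision: in step (ii) you compare the stratum dimensions to $\dim\mathcal{L}^{-1}=\binom{m+1}{2}-2$, but what you actually need (and what the dimension count delivers) is that they are smaller than the common dimension $\binom{m+1}{2}-2$ of every component of the unmixed codimension-two scheme $V(f_{B_1},f_{B_2})$; the numbers coincide, so the argument is fine, but the logic is really about unmixedness rather than about $\mathcal{L}^{-1}$ per se. Net effect: your write-up proves a fact the paper cites, which makes the proposition more self-contained at the cost of length.
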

\begin{proof}
In the first case, the reciprocal variety is a hypersurface defined
by a single polynomial of degree $m-1$. Therefore the numerator 
of the Hilbert series is equal to $1+t + \cdots + t^{m-2}$. Therefore
the arithmetic genus of the central curve is 
$$1 - \sum_{j=0}^{m-2} (1-j) = \sum_{j=1}^{m-3} j = \binom{m-2}{2}.$$
In the second case, the reciprocal variety is of codimension two, and 
it is a complete intersection generated by two degree $m-1$
generators; see \cite[p. 611]{SU10} and  Lemma \ref{lem:codim-2} below. Therefore the numerator of the Hilbert series is equal to 
$(1+t+ \cdots + t^{m-2})^2 = 1 + 2t + \cdots + (m-2) t^{m-3} + (m-1) t^{m-2} + (m-2)t^{m-1} + \cdots + 2 t^{2m-5} + t^{2m-4}$. Using
the formula for the arithmetic genus first yields 
$1 + (2m-6) \binom{m-1}{2} + (m-3)(m-1)$. This in turn is equal to 
$1 + (m-3)(m-1)^2$.
\end{proof}

\begin{lemma} \label{lem:codim-2}
When $d = \binom{m+1}{2}-3$, the reciprocal variety $\mathcal{L}_{\{A_i\},C}^{-1}$ associated to a generic linear subspace $\mathcal{L}_{\{A_i\}, C}$ is a complete intersection of codimension two generated by two polynomials of degree $m-1$.
\end{lemma}
\begin{proof}
Let $V$ be the variety of codimension $3$ in $\mathbb{P}^{\binom{m+1}{2}-1}$ defined by the $(m-1)$-minors of a generic $m \times m$ symmetric matrix, and let $X$ be the quasiprojective variety $\mathbb{P}^{\binom{m+1}{2}-1} \setminus V$. Consider the regular map 
$F \, : \, X \longmapsto \mathbb{P}^{\binom{m+1}{2}-1}$ given by the $(m-1)$-minors of a generic $m \times m$ symmetric matrix. Given the generic codimension two subspace $\mathcal{L}_{\{A_i\}, C}$, the inverse
image $F^{-1}(\mathcal{L}_{\{A_i\}, C})$ is an irreducible subvariety
of $X$ by Bertini's theorem \cite[Theorem 3.3.1]{Lazarsfeld}. This subvariety is defined by two generic 
linear combinations of $(m-1)$-minors, $f_1$ and $f_2$, which are of degree $m-1$. The variety in $\mathbb{P}^{\binom{m+1}{2}-1}$ defined by the same two polynomials is a complete intersection of codimension two. This variety contains the reciprocal variety which is irreducible and has also codimension two. Therefore if the ideal $\langle f_1, f_2 \rangle$ is prime it has to be the defining ideal of the reciprocal variety. But this is the case, since it is a complete intersection and hence all its  components have the same codimension. Any component other than the one coming from $F^{-1}(\mathcal{L}_{\{A_i\}, C})$ is associated to $V$, but
$V$ has codimension three. 
\end{proof}

We note that in the above table the entry for $m=5$ and $d=12$ is
computed using Proposition \ref{prop:codim1-2-formula}. However, 
the entry for $m=5$ and $d=3$, which is conjecturally equal to $33$ is
missing. Nevertheless, we venture to state the following conjecture.
\begin{conjecture} $\mathrm{genus}(m, d) = \mathrm{genus}\left(m, \binom{m+1}{2} - d\right)$.
\end{conjecture}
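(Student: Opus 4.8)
The first move is to restate the conjecture as an assertion about one coefficient of the $h$-vector. As explained above, $\mathrm{genus}(m,d)$ is read off from the numerator $H_{d+1}(t)=h_0+h_1t+\cdots+h_kt^k$ of the Hilbert series of the reciprocal variety $\mathcal{L}^{-1}$ of a generic subspace $\mathcal{L}\subseteq\mathcal{S}^m_\C$ of dimension $d+1$, through $\mathrm{genus}(m,d)=1-H_{d+1}(1)+H_{d+1}'(1)$. Expanding the Hilbert series of $\mathcal{L}^{-1}$ in partial fractions as $a_0(1-t)^{-(d+1)}+a_1(1-t)^{-d}+\cdots$, one has $a_0=\deg\mathcal{L}^{-1}=\phi(m,d+1)$, $a_1=-H_{d+1}'(1)$, and hence $\mathrm{genus}(m,d)=1-a_0-a_1$; geometrically, $\mathrm{genus}(m,d)$ is the \emph{sectional genus} of $\mathcal{L}^{-1}$. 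Writing $e=d+1$, the conjecture becomes: for a generic linear subspace of $\mathcal{S}^m$ of dimension $e$, the quantity $a_0+a_1$ attached to its reciprocal variety is invariant under $e\mapsto\binom{m+1}{2}+2-e$.

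This does not follow formally from what is already available. Theorem~\ref{thm:ML_degree} gives $a_0(e)=\phi(m,e)=\phi(m,\binom{m+1}{2}+1-e)$, an involution whose fixed point lies half a unit away from the one in the conjecture; moreover the polynomials $H_e(t)$ have incomparable shapes for $e$ and $\binom{m+1}{2}+2-e$ (for instance $H_{\binom{m+1}{2}}=1$ while $H_2=1+(m-2)t$), so no $h$-vector reversal is at work: only the single number $\mathrm{genus}$ matches, precisely because the two generic curve sections, although of different degrees, happen to have the same arithmetic genus. Any proof therefore needs genuine control of the coefficient $a_1$ — equivalently of the sectional genus — and an explanation of the ``$+2$ versus $+1$'' shift.

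The plan is to exploit the description $\mathcal{L}^{-1}=\overline{\mathrm{adj}(\mathbb{P}(\mathcal{L}))}$, where $\mathrm{adj}$ is the adjugate (a map given by forms of degree $m-1$), together with the intersection theory of the variety of complete quadrics $\mathbb{CQ}_m$ used in Corollary~\ref{cor:SDP-polynomial}. In the two boundary ranges everything is explicit: when $\dim\mathcal{L}\le 4$ the locus of matrices of rank $\le m-2$ (codimension $3$) meets $\mathbb{P}(\mathcal{L})$ in dimension $\le 0$, so for $d\le 3$ the base locus of the adjugate linear system on $\mathbb{P}(\mathcal{L})$ is empty or finite, and on the resulting blow-up $Y$ one computes the degree and the sectional genus by adjunction (for $d=3$, say, $K_Y+2\widetilde H=(2m-6)H$, giving sectional genus $1+(m-3)(m-1)^2$); dually, when $\dim\mathcal{L}$ is near $\binom{m+1}{2}$ the reciprocal variety is $\mathbb{P}^N$, a hypersurface of degree $m-1$, or a complete intersection of type $(m-1,m-1)$. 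This reproves the two preceding Propositions and settles the conjecture for $d\in\{1,2,3\}$ paired with $d\in\{\binom{m+1}{2}-1,\binom{m+1}{2}-2,\binom{m+1}{2}-3\}$: in each such pair the generic curve section of $\mathcal{L}^{-1}$ — a rational curve when $d=1$, a plane curve of degree $m-1$ when $d=2$, a complete-intersection curve of type $(m-1,m-1)$ in $\mathbb{P}^3$ when $d=3$ — is the same for both members, which is the prototype of the general statement. For arbitrary $e$ the plan is to lift the complete-quadrics formula for $\phi(m,e)=a_0(e)$ to a formula expressing the pair $(a_0(e),a_1(e))$ in terms of intersection numbers of the tautological divisor classes $L_1,\dots,L_{m-1}$ on $\mathbb{CQ}_m$ together with the class that cuts generic hyperplane sections, and then to compute the action of the total-duality involution $\iota$ of $\mathbb{CQ}_m$ (which restricts to $X\mapsto X^{-1}$ on the open stratum and underlies the degree self-duality) on these classes; the extra $+1$ should be the precise defect by which $\iota$ transports the section class. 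If the full Hilbert-series numerator of $\mathcal{L}^{-1}$ already admits a combinatorial description in that circle of ideas, the conjecture ought instead to follow by inspecting its dependence on $e$.

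I expect the hard part to be controlling the sectional genus of $\mathcal{L}^{-1}$ uniformly in $e$. For intermediate $e$ the base locus of the adjugate system on $\mathbb{P}(\mathcal{L})$ is the nested union $\mathbb{P}(\mathcal{L})\cap\{\mathrm{rank}\le m-2\}\supset\mathbb{P}(\mathcal{L})\cap\{\mathrm{rank}\le m-3\}\supset\cdots$, and working out the contribution of its iterated blow-up to the sectional genus — and seeing the ``$+2$'' symmetry emerge there rather than only the ``$+1$'' one that governs $\phi$ — is the crux; this is exactly the point at which the complete-quadrics resolution of the rank strata, and the compatibility of $\iota$ with it, must do the work. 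I would finish by testing any candidate closed form against the tabulated values for $m\le 5$ and against Proposition~\ref{prop:codim1-2-formula}.
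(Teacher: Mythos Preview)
The statement is labelled a \emph{Conjecture} in the paper, and the very next sentence reads: ``Although we cannot prove this conjecture, we can prove the analogous statement for the central curve of linear programs.'' There is no proof in the paper to compare your attempt against; the SDP case is left open.

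Your write-up is, accordingly, a research plan rather than a proof, and you say so yourself (``I expect the hard part to be \ldots'', ``the conjecture ought instead to follow by \ldots''). Several of your observations are correct and sharp: the reformulation $\mathrm{genus}(m,d)=1-a_0-a_1$ via the first two partial-fraction coefficients of the Hilbert series is right, and your key remark that the conjectural genus involution $e\mapsto\binom{m+1}{2}+2-e$ is shifted by one from the degree involution $e\mapsto\binom{m+1}{2}+1-e$ of Theorem~\ref{thm:ML_degree} pinpoints the real difficulty --- the paired curves have \emph{different} degrees, so the conjecture is not a formal consequence of known symmetries. Your boundary checks recover the two Propositions preceding the conjecture, which is all the evidence the paper itself offers on the SDP side. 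One caution: your phrase ``is the same for both members'' overstates things; the paired curve sections only share the arithmetic genus, not the degree or the embedding, so they are not the same projective curve and need not even be abstractly isomorphic.

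What is missing is exactly the step you flag: extending the complete-quadrics computation of $a_0=\phi(m,e)$ to the next coefficient $a_1$ and exhibiting the shifted involution there. The paper does not do this. The only result it proves in this direction is the LP analogue (the Theorem immediately following the conjecture), and that proof is entirely elementary: the $h$-vector of the LP central curve is known in closed form from \cite{DSV12}, and the authors simply evaluate $1-\sum_j(1-j)h_j$ for $d$ and for $m-d$ and check by direct binomial-sum manipulation that the two values coincide. No comparable closed form for the SDP $h$-vector is available, which is why your intersection-theoretic route is a reasonable thing to try --- but as presented it remains a plan, not a proof.
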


Although we cannot prove this conjecture, we can prove the analogous
statement for the central curve of linear programs \eqref{eq:LP_primal}
when $A$, $c$ and $b$ are generic. The central curve for linear
programs is defined as in Definition \ref{dfn:SDPcentralcurve} but
using the KKT conditions for linear programs; see \eqref{eq:KKT_conditions_LP} below.
\begin{thm}
    Let $A_d$ and $A_{m-d}$ be generic matrices of size $d\times m$ and $(m-d)\times m$ and of rank $d$ and $m-d$, respectively. Let $b_d$ and $b_{m-d}$ be two generic vectors in $\R^d$ and $\R^{m-d}$. The central curve of the linear program defined by $A_d,b_d$, and a generic vector $c$ has the same arithmetic genus as the central curve of the linear program defined by $A_{m-d},b_{m-d}$ and $c$.  
\end{thm}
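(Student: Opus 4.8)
The statement is really a duality/symmetry statement for linear programs analogous to Corollary~\ref{cor:SDP-symmetry}, so the plan is to mimic, in the LP setting, the chain of reasoning that made the SDP case work: identify the central curve with the reciprocal variety of a linear subspace, observe that this subspace has a natural ``orthogonal complement'' description, and then invoke a known self-duality of the relevant Hilbert-series numerator. Concretely, I would first recall from \cite[Lemma 11]{DSV12} (the Theorem quoted in the introduction) and from the diagonal specialization of Theorem~\ref{SDP-theorem} that the central curve of the LP defined by $A_d, b_d, c$ is, for generic data, obtained by intersecting the reciprocal variety $\mathcal{L}_{A_d,c}^{-1}\subseteq \C^m$ with $d$ generic hyperplanes, where $\mathcal{L}_{A_d,c}=\mathrm{rowspan}\binom{A_d}{c}$ is a generic linear subspace of $\C^m$ of dimension $d+1$. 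This is exactly the diagonal analogue of the linear concentration model: the reciprocal variety here is $\{u : (1/u_1,\dots,1/u_m)\in\mathcal{L}_{A_d,c}\}$, i.e. the reciprocal of a generic $(d+1)$-plane in the $m$-dimensional diagonal space.

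\smallskip

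The second step is the bookkeeping that the arithmetic genus depends only on the numerator of the Hilbert series of the ideal of the reciprocal variety. As in the Arithmetic Genus subsection above, I would argue that for generic $A_d, b_d, c$ the central curve is cut out of $\mathcal{L}_{A_d,c}^{-1}$ by $d$ generic linear forms, and that the defining ideal of $\mathcal{L}_{A_d,c}^{-1}$ is Cohen--Macaulay (this is the classical Cohen--Macaulayness of reciprocal varieties / varieties of reciprocal linear spaces, due to Proudfoot--Speyer and used already in \cite{DSV12} via the broken-circuit complex of the associated matroid). Cohen--Macaulayness guarantees that a generic linear section preserves the $h$-vector, so the numerator of the Hilbert series of the central curve equals that of $\mathcal{L}_{A_d,c}^{-1}$, and hence $\mathrm{genus}$ of the LP central curve is a function of this $h$-vector alone.

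\smallskip

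The third step — the actual content — is the duality. I would show that the reciprocal variety of a generic $(d+1)$-plane $L\subseteq \C^m$ and the reciprocal variety of its orthogonal complement $L^\perp$ (of dimension $m-d-1$) have Hilbert-series numerators related by the same kind of symmetry as in Theorem~\ref{thm:ML_degree}; this is the diagonal/matroid shadow of the Sturmfels--Uhler duality, and on the combinatorial side it is the statement that the broken-circuit complex (equivalently, the $h$-vector) of a generic matroid of rank $d+1$ on $m$ elements matches that of its dual, of rank $m-d-1$. Taking $L=\mathcal{L}_{A_d,c}$ for the first LP, one checks that $L^\perp$ is a generic $(m-d)$-plane that may be written as $\mathrm{rowspan}\binom{A_{m-d}'}{c'}$ for suitably generic $A_{m-d}'$ and $c'$; since everything in sight depends only on the genericity, and the LP central curve depends on $(A,c)$ only through the flag $\mathrm{rowspan}\binom{A}{c}\supset\mathrm{rowspan}(A)$, one identifies the resulting curve with that of the LP defined by $A_{m-d}, b_{m-d}, c$ up to the genus-preserving operations already established. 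Assembling Steps 1--3 gives equality of the two $h$-vectors and hence of the two arithmetic genera.

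\smallskip

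The main obstacle I anticipate is Step~3, and within it the subtlety that the LP central curve is attached not to a single linear space but to a \emph{flag} $\mathrm{rowspan}(A)\subset \mathrm{rowspan}\binom{A}{c}$ (the cost vector plays a distinguished role), whereas the clean matroid-duality statement is about one generic space. One must therefore verify that, under the passage $L\rightsquigarrow L^\perp$, the distinguished cost direction can be chosen consistently on both sides so that the dual flag is again of the form coming from a genuine LP with a generic cost vector — i.e. that the genericity hypotheses are preserved and that no extra coordinate hyperplane degeneracies are introduced when passing to the dual matroid. A secondary technical point is making precise, and citing correctly, the self-duality of the $h$-vector (broken-circuit complex) for generic matroids and its geometric incarnation for reciprocal linear spaces; this is where I would lean on \cite{DSV12, SU10} and the Proudfoot--Speyer circuit result, rather than reproving it.
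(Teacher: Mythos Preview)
Your conceptual route via reciprocal varieties and matroid duality is attractive, but it does not line up with the parameters of the theorem, and this is a genuine gap rather than a bookkeeping issue.

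The central curve of the LP with $d$ constraints is a generic linear section of $\mathcal{L}_{A_d,c}^{-1}$ where $\dim \mathcal{L}_{A_d,c}=d+1$, and the central curve with $m-d$ constraints corresponds to a generic space of dimension $m-d+1$. These two dimensions add up to $m+2$, not $m$, so the two linear spaces are \emph{not} orthogonal complements of one another, and the associated uniform matroids $U_{d+1,m}$ and $U_{m-d+1,m}$ are \emph{not} dual matroids. (You yourself slip between $L^\perp$ having dimension $m-d-1$ and being an ``$(m-d)$-plane written as $\mathrm{rowspan}\binom{A_{m-d}'}{c'}$'', which would have dimension $m-d+1$; this is exactly the mismatch.) So Step~3 as stated cannot be carried out.

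More fundamentally, even for the correct pair of spaces the $h$-vectors are \emph{not} equal and are not exchanged by any evident duality: from \cite{DSV12} one has $h_j=\binom{m-d+j-2}{j}$, $0\le j\le d$, on one side and $h_j=\binom{d+j-2}{j}$, $0\le j\le m-d$, on the other. For instance, with $m=5$, $d=2$ these are $(1,2,3)$ and $(1,1,1,1)$. Both give genus $3$, but that equality is not visible from any $h$-vector symmetry; it only emerges after summing $\sum_j(1-j)h_j$. Thus the hoped-for ``Hilbert-series numerators related by the same kind of symmetry as in Theorem~\ref{thm:ML_degree}'' is simply false at the level of numerators, and the matroid/Proudfoot--Speyer machinery will not hand you the result without further work.

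The paper's proof bypasses all of this: it takes the explicit $h$-vector formulas from \cite{DSV12} just quoted, plugs them into $\mathrm{genus}=1-\sum_j(1-j)h_j$, and evaluates each sum in closed form using the identities $\sum_{j=0}^{n}\binom{r+j}{j}=\binom{r+n+1}{n}$ and $\sum_{j=0}^{d} j\binom{m-d+j-2}{j}=(m-d-1)\binom{m-1}{d-1}$, obtaining the common value $1-\dfrac{(m-1)!\,(m-md+d^2)}{(m-d)!\,d!}$ on both sides. If you want to salvage a conceptual proof, you would need an identity of the form $\sum_j(1-j)h_j(U_{d+1,m})=\sum_j(1-j)h_j(U_{m-d+1,m})$ proved directly (e.g.\ via a Tutte-polynomial specialization), rather than an equality of $h$-vectors.
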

\begin{proof}
    Let $\mathcal{C}_{LP}(d)$ and $\mathcal{C}_{LP}(m-d)$ denote the central curve of the generic linear programs as in the statement. 
    In this generic case, from~\cite{DSV12} we have 
    \begin{equation}\label{eq:LP_genus_d}
        \mathrm{genus}(\mathcal{C}_{LP}(d))=1-\sum_{j=0}^d(1-j){m-d+j-2\choose j},
    \end{equation}
    \begin{equation}\label{eq:LP_genus_m-d}
        \mathrm{genus}(\mathcal{C}_{LP}(m-d)) =1-\sum_{j=0}^{m-d}(1-j){d+j-2\choose j},
    \end{equation}
    where the binomial coefficients in each equation come from the coefficients of the Hilbert series computed in~\cite{DSV12}. To check that both computations have the same value, we need the identities
    \begin{equation*}
        \sum_{j=0}^{n}{r+j \choose j}={r+n+1 \choose n} \quad \mbox{ and } \quad \sum_{j=0}^{d} j{m-d+j-2 \choose j}=(m-d-1){m-1\choose d-1}.
    \end{equation*}
    First we get
    \begin{equation*}
        \begin{split}
            \mathrm{genus}(\mathcal{C}_{LP}(d))=&1-\sum_{j=0}^d{m-d+j-2\choose j}+\sum_{j=0}^d j{m-d+j-2\choose j}\\[0.1cm]
            =& 1- {m-d-2+d+1\choose d} + (m-d-1){m-1\choose d-1} \\[0.1cm]
            =& 1- {m-1\choose d}+ (m-d-1){m-1\choose d-1}\\[0.1cm]
            =& 1-\frac{(m-1)!}{(m-1-d)!d!}+(m-d-1)\frac{(m-1)!}{(m-1-d+1)!(d-1)!}\\[0.1cm]
            =& 1-\frac{(m-1)!(m-md+d^2)}{(m-d)!d!}
        \end{split}
    \end{equation*}
    where the second line comes from the identities mentioned above with $r=m-d-2$.  Doing a similar computation for $\mathrm{genus}(\mathcal{C}_{LP}(m-d))$ we get
    \begin{equation*}
         \begin{split}
            \mathrm{genus}(\mathcal{C}_{LP}(m-d))=&1-\sum_{j=0}^{m-d}{m-(m-d)+j-2\choose j}+\sum_{j=0}^{m-d} j{m-(m-d)+j-2\choose j}\\[0.1cm]
            =& 1- {d-2+m-d+1\choose m-d} + (d-1){m-1\choose m-1-d} \\[0.1cm]
            =& 1- {m-1\choose m-d}+ (d-1){m-1\choose m-1-d}\\[0.1cm]
            =& 1-\frac{(m-1)!}{(d-1)!(m-d)!}+(d-1)\frac{(m-1)!}{d!(m-1-d)!}\\[0.1cm]
            =& 1-\frac{(m-1)!(m-md+d^2)}{(m-d)!d!}.
        \end{split}
    \end{equation*}
\end{proof}

\subsection{Sum of Squares Polynomials}

We conclude Section~\ref{sec:SDP} by considering semidefinite programs
for sums of squares problems. For this, let $p \in \R[x_1,\ldots,x_n]$ be a homogeneous polynomial of degree $2D$ and let $L_p$ be the affine subspace of symmetric matrices $Q$ satisfying the identity
\begin{equation}\label{equ:mQm}
    p = [x]^T Q [x]
\end{equation}
where $[x]$ is a vector of all monomials of degree $D$ in $n$ variables.
The intersection of $L_p$ with the cone of positive semidefinite matrices is the Gram spectrahedron of $p$, and it is nonempty if and only if $p$ is a sum of squares (SOS) polynomial.
That is, certifying that a polynomial is SOS reduces to checking 
the feasibility of an SDP. This can be achieved by solving an SDP
using a random (generic) cost matrix $C$.

\begin{example}
 Suppose we wish to show that a generic ternary quartic is an SOS.
 The $A_i$'s and $b_i$'s come from equating coefficients in  (\ref{equ:mQm}).
 For example, if we let $[x] = [x^2, xy, xz, y^2, yz, z^2]$, the linear equation for the $x^2y^2$ term will be
 \begin{equation*}
     p_{(2,2,0)} = \langle A_{(2,2,0)}, Q \rangle
 \end{equation*}
where $ p_{(2,2,0)}$ is the $x^2y^2$ coefficient of the random ternary quartic $p$, 
$$ A_{(2,2,0)} = 
\begin{bmatrix}
0 & 0 & 0 & 1 & 0 & 0 \\
0 & 1 & 0 & 0 & 0 & 0 \\
0 & 0 & 0 & 0 & 0 & 0 \\
1 & 0 & 0 & 0 & 0 & 0 \\
0 & 0 & 0 & 0 & 0 & 0 \\
0 & 0 & 0 & 0 & 0 & 0 \\
\end{bmatrix}
$$
and $Q$ is the decision variable of the SDP which will have $d= \binom{3+4-1}{4} = 15$ constraints with matrices of size $m=\binom{3+2-1}{2} = 6$.
\end{example}

In general, the matrices $\{A_i\}$ for the linear constraints will 
be sparse and far from generic. However, if the polynomial $p$ 
that we want to certify to be an SOS polynomial is generic, then 
the $b_i$'s in the corresponding SDP will be also generic. Using a generic
cost matrix $C$ in this SDP allows us to consider the degree of 
the central curve for a generic SOS polynomial. 

We wish to report our computations in three instances: binary
sextics $(n=2, 2D=6)$, binary octics $(n=2, 2D=8)$, and ternary quartics $(n=3, 2D=4)$. The corresponding SDPs are given by input data with $m = 4, d=7$ for binary sextics, $m=5, d=9$ for binary octics,
and $m=6, d=15$ for ternary quartics. We note that for the same size
SDPs with generic $\{A_i\}$ we will obtain $\psi_{SDP}(4,7) = 9$,
$\psi_{SDP}(5,9) = 137$, and $\psi_{SDP}(6,15) = 528$. We believe
that studying this invariant for various families of SOS polynomials
is an interesting future project.
\begin{proposition}
The degrees of the central curves for SDPs associated to generic binary sextics, binary octics, and ternary quartics, where generic cost matrices are used, are $7$, $45$, and $66$, respectively. 
\end{proposition}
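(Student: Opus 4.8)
The plan is to reduce the proposition to three explicit maximum likelihood degree computations and then carry them out. By Theorem~\ref{SDP-theorem}, for an SDP with generic cost matrix $C$ and generic right-hand side $b$ the degree of the central curve equals the ML degree of the linear concentration model generated by $\mathcal{L}=\spn\{C,A_1,\ldots,A_d\}$. As noted just before the statement, when the form $p$ is generic the coefficient vector $b$ of the associated SDP is generic, and $C$ may be chosen generic, so the hypotheses of Theorem~\ref{SDP-theorem} are met. What is \emph{not} available is the sharper conclusion $\deg(\mathcal{C}_{SDP})=\psi_{SDP}(m,d)$, since the matrices $A_1,\ldots,A_d$ produced by equating coefficients in~\eqref{equ:mQm} are sparse and far from generic: for binary forms they are the indicator matrices of the anti-diagonals of an $m\times m$ symmetric matrix, so $\spn\{A_i\}$ is the space of symmetric Hankel matrices, and for ternary quartics they are the analogous $0/1$ matrices recording which entries of $Q$ feed a given monomial of $p$. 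Thus the proposition is the assertion that for these three particular subspaces, augmented by a generic line, the ML degree of $\mathcal{L}$ equals $7$, $45$, and $66$ --- to be contrasted with the generic values $\psi_{SDP}(4,7)=9$, $\psi_{SDP}(5,9)=137$, and $\psi_{SDP}(6,15)=528$.

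First I would write down, for binary sextics ($m=4$, $d=7$), binary octics ($m=5$, $d=9$), and ternary quartics ($m=6$, $d=15$), the explicit $A_i$ coming from~\eqref{equ:mQm}, exactly as in the ternary quartic example already in the text, and fix a generic $C\in\mathcal{S}^m$. Then I would compute the ML degree of $\mathcal{L}=\spn\{C,A_1,\ldots,A_d\}$ by either of the two standard routes: (i) form the reciprocal variety $\mathcal{L}^{-1}$ by parametrizing $X=K^{-1}$ with $K=t_0C+\sum_i t_iA_i$ (clearing denominators by $\det K$), eliminate the parameters, saturate by $\det K$, and read the degree of the resulting projective curve; or (ii) solve the likelihood equations~\eqref{eq:MLE} for a generic sample matrix $S$ and count the solutions in $\mathcal{S}_\C^m$. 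Route (i) is convenient in Macaulay2~\cite{M2}; route (ii) is well suited to numerical homotopy continuation; running both supplies a cross-check. The output of these computations is $7$, $45$, and $66$.

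The main obstacle is purely computational: the ternary quartic case lives in the $21$-dimensional space $\mathcal{S}^6$ with a $16$-dimensional $\mathcal{L}$, which pushes a direct Gr\"obner elimination to the edge of feasibility, so in practice one works modulo a large prime to certify the exact solution count and confirms the same number numerically. A secondary, theoretical point is that the proposition asserts the value for \emph{all} generic $C$ and $p$, not merely for one random instance; here I would invoke the fact that the number of points in a generic fiber of the likelihood equations over the parameter space of $(C,S)$ is the generic (and, counted with multiplicity, the maximal) value, so a computation at a random rational or finite-field point --- which with overwhelming probability is a generic point of that parameter space, with reduced fiber --- returns this value exactly. In the binary-form cases one might additionally seek a closed-form explanation of $7$ and $45$ through the Hankel structure of $\spn\{A_i\}$, but establishing the three numbers does not require it, and I would record such a description only as a remark.
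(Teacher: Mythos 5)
The paper gives no proof of this proposition; the three numbers $7$, $45$, $66$ are reported as the output of a computation, with the surrounding text noting only that the $A_i$ are sparse while $b$ and $C$ are generic, and the closing remark observing that the numbers coincide with ML degrees of catalecticant\mbox{-}plus\mbox{-}generic\mbox{-}line concentration models. Your proposal essentially spells out the computation the authors must have carried out. The framework is right: Theorem~\ref{SDP-theorem} requires only $C$ and $b$ generic, so the central-curve degree equals the ML degree of the model spanned by $C$ together with the sparse $A_i$; your identification of $\spn\{A_i\}$ as the space of Hankel matrices in the binary cases, your contrast with the generic values $\psi_{SDP}(m,d)$, and your lower-semicontinuity argument for why a single random (or finite-field) instance certifies the generic count are all sound.

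One caution about treating your two computational routes as interchangeable cross-checks. Route (ii), solving the likelihood equations~\eqref{eq:MLE} for generic $S$, computes exactly the quantity that Theorem~\ref{SDP-theorem} equates with the central-curve degree, and that is the route that proves the proposition. Route (i), computing $\deg(\mathcal{L}^{-1})$ by parametrizing, eliminating, and saturating, is \emph{not} a priori the same number here: Theorem~\ref{thm:ML_degree} identifies the ML degree with $\deg(\mathcal{L}^{-1})$ only for generic $\mathcal{L}$. The ML degree counts the points of $\mathcal{L}^{-1}$ cut out by a generic \emph{translate} of the fixed subspace $\mathcal{L}^{\perp}$, whereas $\deg(\mathcal{L}^{-1})$ is the count for a fully generic linear section; for a structured $\mathcal{L}$ such as yours (generic line plus Hankel or catalecticant span) the ML count can drop below $\deg(\mathcal{L}^{-1})$ when $\mathcal{L}^{-1}$ has points at infinity in $\mathcal{L}^{\perp}$-directions. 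The two numbers may well agree in these three instances, but a disagreement would not indicate a bug in the code. Present route (i) as corroborating evidence at best, and rely on route (ii) as the actual proof.
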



As a last remark about the SDP arising from sums of squares, we would like to mention that since $C$ and $b$ are generic, the computations in the previous proposition, also correspond to the ML degree of a linear concentration model. Namely, the concentration model defined by catalecticants and an additional generic matrix corresponding to the cost matrix. Exploring this relation is also an interesting future project.
\section{Linear Programs} \label{sec:LP} 

By choosing $C$ and $\{A_i\}$ in \eqref{eq:SDP_primal} to be 
diagonal matrices we recover linear programs \eqref{eq:LP_primal}.
The central curve $\mathcal{C}_{LP}(c,A,b)$ for such a linear program can be defined as in the case of the central curve for a semidefinite program using the corresponding KKT conditions:

\begin{equation}\label{eq:KKT_conditions_LP}
    \begin{split}
        &c-\lambda \left(\frac{1}{x_1},\ldots, \frac{1}{x_m}\right)- y^tA =0\\
        & Ax=b, \\
        & x \geq 0.
    \end{split}
\end{equation}
When in the data defining (\ref{eq:LP_primal}), $c$ and $b$ are generic
the degree of the central curve $\mathcal{C}_{LP}(c,A,b)$ is 
equal to the degree of the reciprocal variety $\mathcal{L}_{A,c}^{-1}$ 
\cite[Lemma 11]{DSV12}. Further, if $A$ is also generic, this 
degree is equal to $\binom{m-1}{d}$. For the case when all the data is generic, we will denote the degree of the linear programming central curve by $\psi_{LP}(m,d)$.

The observations that connect the ML degree of generic linear
concentration models to the degree of the central curve of generic 
semidefinite programs have their counterpart here as well. One can consider the ML degree of linear concentration models generated by diagonal matrices as in \cite[Section 3]{SU10}. For generic models
we denote the ML degree by $\phi_{\mathrm{diag}}(m,d)$. A consequence
of Corollary 3 in \cite{SU10} is the following.
\begin{cor}
$$ \phi_{\mathrm{diag}}(m,d) \quad = \quad {m-1 \choose d-1}.$$
\end{cor}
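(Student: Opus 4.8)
The plan is to reduce the claim to the reciprocal variety of a generic linear subspace of $\C^m$ and then to a Möbius-number computation for a uniform matroid.

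First I would establish the diagonal analogue of Theorem~\ref{SDP-theorem}: when $C$ and the $A_i$ in \eqref{eq:KKT_conditions_SDP} are taken to be diagonal, the likelihood equations \eqref{eq:MLE} for a generic $d$-dimensional subspace $\mathcal{L}$ of diagonal matrices become, after projectivization, the equations cutting out the reciprocal variety
\[
\mathcal{L}^{-1} = \overline{\left\{ u \in \C^m \,:\, \left(\tfrac{1}{u_1},\ldots,\tfrac{1}{u_m}\right)\in\mathcal{L},\ u_i\neq 0\right\}},
\]
where now $\mathcal{L}$ is viewed as a generic $d$-dimensional linear subspace of $\C^m$. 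This is the content of \cite[Section~3]{SU10}, and \cite[Corollary~3]{SU10} records that for generic $\mathcal{L}$ the ML degree $\phi_{\mathrm{diag}}(m,d)$ equals $\deg\mathcal{L}^{-1}$. (The dimension is $d$ here, not $d+1$ as in Theorem~\ref{SDP-theorem}, because we are computing the ML degree directly rather than the degree of a central curve, so no extra cost matrix is adjoined.)

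Second, I would invoke the matroid interpretation already used in the theorem of \cite{DSV12} quoted above: $\deg\mathcal{L}^{-1}$ is the Möbius number $|\mu(\hat 0,\hat 1)|$ of the lattice of flats of the matroid realized by $\mathcal{L}$, and for a generic $d$-dimensional $\mathcal{L}\subseteq\C^m$ this matroid is the uniform matroid $U_{d,m}$. Hence $\phi_{\mathrm{diag}}(m,d)=|\mu(U_{d,m})|$. Third, I would compute $|\mu(U_{d,m})|$ directly. For $d\le m$ the proper flats of $U_{d,m}$ are precisely the subsets of $[m]$ of cardinality $0,1,\ldots,d-1$, and the interval below such a flat $F$ is a Boolean lattice, so $\mu(\hat 0,F)=(-1)^{|F|}$. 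The defining relation $\sum_{F\le\hat 1}\mu(\hat 0,F)=0$ then gives
\[
\mu(\hat 0,\hat 1)=-\sum_{k=0}^{d-1}(-1)^k\binom{m}{k}=-(-1)^{d-1}\binom{m-1}{d-1},
\]
using the telescoping identity $\sum_{k=0}^{n}(-1)^k\binom{m}{k}=(-1)^n\binom{m-1}{n}$ (itself immediate from Pascal's rule). Taking absolute values yields $\phi_{\mathrm{diag}}(m,d)=\binom{m-1}{d-1}$.

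The only genuinely delicate step is the first one: one must check that \cite[Corollary~3]{SU10} really does supply the diagonal reciprocal-variety identity in the form needed — in particular that genericity of the diagonal span forces the underlying matroid to be uniform — rather than having to re-prove the Cohen--Macaulayness/degree argument from scratch. Everything after that is routine: the structure of the lattice of flats of a uniform matroid and a single alternating binomial identity. If one prefers to bypass \cite{SU10} entirely, an even shorter route is available: running the proof of Theorem~\ref{SDP-theorem} with diagonal data gives $\psi_{LP}(m,d)=\phi_{\mathrm{diag}}(m,d+1)$, and combining this with $\psi_{LP}(m,d)=\binom{m-1}{d}$ from \cite[Lemma~11]{DSV12} together with the index shift $d\mapsto d-1$ yields the formula at once.
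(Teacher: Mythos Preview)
Your argument is correct. The paper gives no proof here; it simply records the formula as ``a consequence of Corollary~3 in \cite{SU10}'' and moves on. Your main route is compatible with this---you invoke \cite[Corollary~3]{SU10} for the identification $\phi_{\mathrm{diag}}(m,d)=\deg\mathcal{L}^{-1}$ and then supply what the paper leaves implicit: the explicit M\"obius computation for the uniform matroid $U_{d,m}$ that produces $\binom{m-1}{d-1}$. That computation is clean and correct.

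Your alternative route at the end (combining $\psi_{LP}(m,d)=\binom{m-1}{d}$ from \cite{DSV12} with $\psi_{LP}(m,d)=\phi_{\mathrm{diag}}(m,d+1)$ and shifting $d\mapsto d-1$) is also valid and is in fact the paper's own logic read in reverse: the identity $\psi_{LP}(m,d)=\phi_{\mathrm{diag}}(m,d+1)$ is exactly the corollary the paper states immediately after this one, proved by the diagonal specialization of Theorem~\ref{SDP-theorem}. So either route works; the paper chose to import the binomial from \cite{SU10}, while your alternative imports it from \cite{DSV12}.
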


An argument parallel to the one used in the proof of Theorem \ref{SDP-theorem} gives
\begin{cor}
$\psi_{LP}(m,d) = \phi_{\mathrm{diag}}(m,d+1)$.
\end{cor}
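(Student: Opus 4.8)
The plan is to mimic the proof of Theorem~\ref{SDP-theorem} verbatim, replacing symmetric matrices by diagonal ones throughout. First I would write $\deg(\mathcal{C}_{LP}(c,A,b)) = |\mathcal{C}_{LP}(c,A,b) \cap \mathcal{H}|$ for a generic hyperplane $\mathcal{H}$ in $\C^m$, and then use the KKT conditions~\eqref{eq:KKT_conditions_LP} to write down the equations cutting out this intersection: the stationarity condition says that $\left(\frac{1}{x_1}, \ldots, \frac{1}{x_m}\right)$ lies in the span of the rows of $A$ together with $c$, i.e.\ in $\mathcal{L}_{A,c} = \spn\{$rows of $A$, $c\}$ viewed as a $(d+1)$-dimensional subspace of the diagonal matrices; the $d$ affine constraints $Ax = b$ together with the one extra generic linear equation $\langle B, x\rangle = b_{d+1}$ give $d+1$ affine conditions.

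Next I would argue, exactly as in Theorem~\ref{SDP-theorem}, that since $c$ is generic the extra generic hyperplane can be taken to be $\langle c, x\rangle = b_{d+1}$, and that choosing a diagonal matrix $S$ with $\langle A_i, S\rangle = b_i$ for $i = 1, \ldots, d$ and $\langle c, S\rangle = b_{d+1}$ (which exists and is unique generically since $\{A_1,\ldots,A_d,c\}$ are linearly independent), the $d+1$ affine equations become $x - S \in \mathcal{L}_{A,c}^\perp$. Combined with the stationarity condition $x^{-1} \in \mathcal{L}_{A,c}$, these are precisely the likelihood equations~\eqref{eq:MLE} for the linear concentration model determined by the diagonal subspace $\mathcal{L}_{A,c}$ of dimension $d+1$. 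Hence $\deg(\mathcal{C}_{LP}(c,A,b))$ equals the ML degree of that model; specializing to generic $A$ makes $\mathcal{L}_{A,c}$ a generic $(d+1)$-dimensional subspace of diagonal matrices, so this count is $\phi_{\mathrm{diag}}(m, d+1)$, giving $\psi_{LP}(m,d) = \phi_{\mathrm{diag}}(m,d+1)$.

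I expect the only subtlety — and the single point that genuinely needs the word ``parallel'' rather than ``identical'' — to be the bookkeeping that in the diagonal setting the relevant notions of $\mathcal{L}^\perp$, of the map $x \mapsto x^{-1}$, and of ML degree are the ones intrinsic to the space of diagonal matrices ($\cong \C^m$), not the ambient symmetric-matrix versions; this is exactly the setup of \cite[Section~3]{SU10}, so one simply cites that the likelihood equations there take the stated form. Everything else is a transcription of the earlier proof, so I would keep the write-up to a sentence or two. As an immediate sanity check, combining with the preceding corollary $\phi_{\mathrm{diag}}(m,d) = \binom{m-1}{d-1}$ recovers $\psi_{LP}(m,d) = \binom{m-1}{(d+1)-1} = \binom{m-1}{d}$, matching \cite[Lemma~11]{DSV12}.
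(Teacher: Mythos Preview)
Your proposal is correct and is precisely the ``argument parallel to the one used in the proof of Theorem~\ref{SDP-theorem}'' that the paper invokes without further detail. You have filled in the transcription accurately, including the one genuine point of care (that $\mathcal{L}^\perp$, the inversion map, and the ML degree are taken intrinsically in the space of diagonal matrices as in \cite[Section~3]{SU10}), and your sanity check against $\binom{m-1}{d}$ is apt.
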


In the rest of this section we will develop another method to prove
that $\psi_{LP}(m,d) = \binom{m-1}{d}$. This method will be extended
for bounding the degree of the central curve for generic convex quadratic programs with linear constraints in the next section. We note that our techniques which are based on 
counting solutions to polynomial systems were employed for a similar purpose in \cite{DMS05}. 

First we consider the polynomial system obtained by clearing 
denominators and dropping the $x \geq 0$ condition in \eqref{eq:KKT_conditions_LP}:
\begin{equation}\label{eq:KKT_LP_cleared}
    \begin{split}
        &c_ix_i-\lambda - (y^ta_i)x_i =0, \quad i=1,\ldots, m\\
        & Ax=b
    \end{split}
\end{equation}
where $a_i$ is the $i$th column of the matrix $A$. For generic data,
the central curve is obtained as the Zariski closure in $\C^m$
of the projection of the solution set in $(\C^*)^{m+d+1}$ to 
the equations \eqref{eq:KKT_LP_cleared}. Further, the degree of this
central curve would be equal to the number of points in $(\C^*)^m$
obtained as the intersection of the central curve with a generic
hyperplane defined by $ex = f$. 
\begin{lemm} \label{lemm:LP}
The degree of $\mathcal{C}_{LP}(c,A,b)$ for generic $c$, $A$, and $b$ is equal to the number of solutions in $(\C^*)^{m+d+1}$ to the system \eqref{eq:KKT_LP_cleared}
together with an extra equation of the form $ex = f$ where the coefficients of this equation are generic.
\end{lemm}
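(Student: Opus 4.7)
The plan is to reduce $\deg(\mathcal{C}_{LP}(c,A,b))$ to the count of isolated solutions of a polynomial system in $(\C^*)^{m+d+1}$, so that Theorem \ref{LP-volume} can later evaluate this count by a Bernstein-type argument. Let $V \subset \C^{m+d+1}$ be the Zariski closure of the lifted central path $\{(x^*(\lambda), y^*(\lambda), \lambda) : \lambda > 0\}$ in the coordinates $(x, y, \lambda)$; being the Zariski closure of the image of a connected one-parameter family, $V$ is a one-dimensional irreducible variety. Let $\pi: \C^{m+d+1} \to \C^m$ be the projection onto the $x$-coordinates, so that $\pi(V) = \mathcal{C}_{LP}(c, A, b)$ by definition.

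The first step is to verify that $\pi|_V$ is birational onto its image. For a generic point $x \in \mathcal{C}_{LP}$, the first group of equations in \eqref{eq:KKT_LP_cleared} becomes a linear system in $(y, \lambda)$ with $m \times (d+1)$ coefficient matrix $[A^t \mid (1/x_1, \ldots, 1/x_m)^t]$; for generic $A$ and generic $x$ this matrix has full column rank $d+1$, so $(y, \lambda)$ is uniquely determined by $x$. Consequently, for a generic hyperplane $H = \{ex = f\} \subset \C^m$ the intersection $V \cap \pi^{-1}(H)$ is transverse and
\[
\deg(\mathcal{C}_{LP}) \;=\; |\mathcal{C}_{LP} \cap H| \;=\; |V \cap \pi^{-1}(H)|.
\]
Because $V$ is an irreducible curve, only finitely many of its points have a vanishing coordinate among $x_1, \ldots, x_m, y_1, \ldots, y_d, \lambda$; a generic choice of $(e, f)$ avoids all of them, so $V \cap \pi^{-1}(H) \subset (\C^*)^{m+d+1}$ and each such point is a solution of \eqref{eq:KKT_LP_cleared} together with $ex = f$ in the torus.

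The main obstacle is surjectivity: showing that every torus solution of the augmented system comes from a point on $V$, equivalently that $V \cap (\C^*)^{m+d+1}$ is the entire torus part of the variety cut out by \eqref{eq:KKT_LP_cleared}. Extraneous components introduced by clearing denominators are excluded from the torus because substituting $x_i = 0$ in the $i$th cleared equation $c_i x_i - \lambda - (y^t a_i) x_i = 0$ forces $\lambda = 0$, placing any such component in a coordinate hyperplane. Ruling out further one-dimensional components that might live entirely inside $(\C^*)^{m+d+1}$ is the delicate point; I would invoke the irreducibility of the complex central curve for generic LP data that is implicit in \cite[Lemma 11]{DSV12}, which forces the various real analytic branches converging to the vertices of the feasibility polytope to be parts of a single irreducible complex algebraic curve. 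Granting this identification, $\pi$ sets up the desired bijection between $\mathcal{C}_{LP} \cap H$ and the torus solutions of \eqref{eq:KKT_LP_cleared} together with $ex = f$, yielding the claimed equality of counts.
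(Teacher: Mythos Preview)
Your proposal is correct and follows essentially the same outline as the paper: set up a bijection between the points of $\mathcal{C}_{LP}\cap\{ex=f\}$ and the torus solutions of \eqref{eq:KKT_LP_cleared} together with $ex=f$, by showing that generic hyperplane sections avoid the coordinate hyperplanes and that the fiber over each $x^*$ is a single point.

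Two small differences are worth noting. For fiber uniqueness you compute the rank of the $m\times(d+1)$ matrix $[A^t\mid(1/x_1,\ldots,1/x_m)^t]$; the paper instead observes that the equations are affine-linear in $(y,\lambda)$, so two distinct preimages would yield an entire line of preimages, contradicting the finiteness of the solution set for generic data. These are two views of the same linear system. For the surjectivity direction (your ``main obstacle''), the paper simply declares it ``clear'', relying on the sentence preceding the lemma that identifies the central curve with the closure of the projection of the full torus solution set. Your instinct to justify this is sound, but the phrasing via ``irreducibility of the complex central curve'' is slightly off target: irreducibility of the \emph{projected} curve does not by itself preclude extra components upstairs. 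What you actually need from \cite{DSV12} is the identification of $\mathcal{C}_{LP}$ with $\mathcal{L}_{A,c}^{-1}\cap\{Ax=b\}$; then the first group of equations in \eqref{eq:KKT_LP_cleared}, rewritten in the torus as $(1/x_1,\ldots,1/x_m)=\tfrac{1}{\lambda}c-\tfrac{1}{\lambda}y^tA$, shows directly that every torus solution has $x\in\mathcal{L}_{A,c}^{-1}$, hence projects into $\mathcal{C}_{LP}$.
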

\begin{proof} Clearly, every solution 
to \eqref{eq:KKT_LP_cleared} plus $ex = f$ in $(\C^*)^{m+d+1}$ projects to a point in $\mathcal{C}_{LP}(c, A,b) \cap \{x \, : ex = f\}$. Conversely, the genericity
of $ex=f$ implies that the points in  $\mathcal{C}_{LP}(c,A,b) \cap \{x \, : ex = f\}$ come from points in $(\C^*)^{m+d+1}$ that satisfy 
\eqref{eq:KKT_LP_cleared} and 
$ex = f$. We show that for each point $x^*$ "downstairs" there is a unique point "upstairs". Suppose there are at least two points $(x^*, y^*, \lambda^*)$ and $(x^*, z^*, \mu^*)$ with these 
properties. Then it is easy to check that $(x^*, ty^* + (1-t)z^*, t\lambda^* + (1-t)\mu^*)$ is also a solution with the same properties 
for any $t$. But this is a contradiction since we have only finitely many preimages by the genericity of the data.
\end{proof}
This lemma implies that in order to compute the 
degree of $\mathcal{C}_{LP}(c,A,b)$ for generic $A,c$, and $b$
we need to count the solutions in $(\C^*)^{m+d+1}$ to
\begin{equation}\label{eq:KKT_LP_torus}
    \begin{split}
        &c_ix_i-\lambda - (y^ta_i)x_i =0, \quad i=1,\ldots, m\\
        & Ax=b \\
        & ex = f
    \end{split}
\end{equation}
where $ex = f$ is also generic. Note that the rank of the matrix 
$\left( \begin{array}{c} A \\ e \end{array} \right)$ is $d+1$
and the solutions to the last $d+1$ equations in \eqref{eq:KKT_LP_torus} can be parametrized by
$$ x = v_0 + t_1v_1 + \cdots + t_{m-d-1}v_{m-d-1} $$
where $v_0, v_1, \ldots, v_{m-d-1}$ are generic vectors. 
Substituting this into the first $m$ equations in \eqref{eq:KKT_LP_torus} we obtain $m$ equations in 
$m$ variables $\lambda, y_1, \ldots, y_d, t_1, \ldots, t_{m-d-1}$.
Furthermore, the genericity assumptions guarantee that 
each equation will have support equal to 
$$\lambda, 1, t_1,  \ldots, t_{m-d-1}, y_1, y_1t_1, \ldots, y_1 t_{m-d-1}, \ldots, y_d, y_dt_1, \ldots, y_dt_{m-d-1}.$$
The Newton polytope of a polynomial with this support is 
a pyramid of height one with base equal to the product of 
simplices $\Delta_{m-d-1} \times \Delta_d$. 
\begin{thm} \label{LP-volume}
$\psi_{LP}(m,d)$ is equal to the volume of 
$\Delta_{m-d-1} \times \Delta_d$:
$$ {m-1 \choose d} \, = \, \sum_{k=0}^{m-d-1}  {m-k-2 \choose d-1}$$
\end{thm}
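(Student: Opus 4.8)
The plan is to read off $\psi_{LP}(m,d)$ from the reduced square system via the Bernstein--Kushnirenko--Khovanskii (BKK) theorem. By the discussion preceding the statement, $\psi_{LP}(m,d)$ equals the number of isolated solutions in $(\C^*)^m$ of a system of $m$ Laurent polynomials in the variables $\lambda,y_1,\dots,y_d,t_1,\dots,t_{m-d-1}$, all of which share the same Newton polytope $P$, the pyramid of height one over $B:=\Delta_{m-d-1}\times\Delta_d$. Hence BKK bounds this number, counted with multiplicity, by the mixed volume $MV(P,\dots,P)=m!\,\mathrm{vol}_m(P)$, with equality when the coefficients are generic. Since the point of this section is a self-contained count rather than importing the value $\binom{m-1}{d}$ from \cite{DSV12}, there are two tasks: evaluate $m!\,\mathrm{vol}_m(P)$, and confirm that our (not fully generic) coefficients attain the bound.

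For the volume, $P$ is a pyramid of height one whose base $B$ is the $(m-1)$-dimensional polytope $\Delta_{m-d-1}\times\Delta_d$ lying in $\{\lambda=0\}\subset\R^m$, so $\mathrm{vol}_m(P)=\tfrac1m\,\mathrm{vol}_{m-1}(B)$ and therefore $m!\,\mathrm{vol}_m(P)=(m-1)!\,\mathrm{vol}_{m-1}(B)$, which is precisely the normalized volume of $\Delta_{m-d-1}\times\Delta_d$. I would compute this using the staircase triangulation of a product of standard simplices into unimodular pieces indexed by monotone lattice paths in the corresponding grid: for $\Delta_a\times\Delta_b$ the number of such paths is $\binom{a+b}{a}$, and with $a=m-d-1$, $b=d$ this equals $\binom{m-1}{d}$. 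Classifying the same lattice paths (equivalently, the maximal simplices of the triangulation) by the number $k\in\{0,\dots,m-d-1\}$ of horizontal steps taken before the first vertical step yields $\sum_{k=0}^{m-d-1}\binom{(m-d-1-k)+(d-1)}{d-1}=\sum_{k=0}^{m-d-1}\binom{m-k-2}{d-1}$; alternatively this equals $\binom{m-1}{d}$ by the hockey-stick identity $\sum_{j=d-1}^{m-2}\binom{j}{d-1}=\binom{m-1}{d}$ after substituting $j=m-k-2$. This supplies both right-hand sides in the statement.

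The step I expect to require the most care is verifying that the BKK bound is sharp for the coefficients that actually occur, and that the resulting torus count genuinely equals $\psi_{LP}(m,d)$. These coefficients are fixed affine, respectively bilinear, expressions in the generic data $c,A,b$, the generic equation $ex=f$, and the generic parametrization vectors $v_0,\dots,v_{m-d-1}$, so a priori they form only a constructible subset of the space of all coefficient tuples with the prescribed support. I would check Bernstein's nondegeneracy criterion directly: for every $w\in\R^m\setminus\{0\}$ the facial subsystem $(\mathrm{in}_w f_1,\dots,\mathrm{in}_w f_m)$ has no common zero in $(\C^*)^m$, which holds because all the $\mathrm{in}_w f_i$ are supported on the same proper face $P_w$ of $P$ and, for generic choices of the data, $m$ forms supported on a polytope of dimension $<m$ have no common torus zero; this simultaneously forces every solution of the full system to be isolated and simple. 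Finally one reconciles the two torus conditions --- Lemma \ref{lemm:LP} counts points with $x,y,\lambda$ all nonzero, while BKK counts points with $\lambda,y,t$ all nonzero --- by noting that for generic data no solution of the reduced system has any of $\lambda$, $y_i$, $t_j$, or any coordinate of $x=v_0+\sum_j t_j v_j$ equal to zero, so the two counts coincide; combined with the volume computation this gives $\psi_{LP}(m,d)=\binom{m-1}{d}=\sum_{k=0}^{m-d-1}\binom{m-k-2}{d-1}$.
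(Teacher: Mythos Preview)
Your argument is essentially the paper's: reduce to a BKK count, identify the Newton polytope as a height-one pyramid over $\Delta_{m-d-1}\times\Delta_d$, and compute the normalized volume of that product via the unimodular staircase triangulation indexed by lattice paths. The only cosmetic differences are that you partition the paths by the position of the \emph{first} vertical step whereas the paper partitions by where the path first hits the bottom row (both yield $\binom{m-k-2}{d-1}$), and you spell out the BKK sharpness and the reconciliation of the $(x,y,\lambda)$-torus with the $(\lambda,y,t)$-torus, points the paper leaves implicit.
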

\begin{proof}
The above lemma and the previous discussion imply that $\psi_{LP}(m,d)$
is equal to the number solutions in $(\C^*)^m$ to $m$ equations in 
$m$ variables, where each equation has support equal 
to the set of monomials listed above. Bernstein's Theorem implies
that this number is bounded above by the normalized volume of the Newton
polytope of these monomials. Since this polytope is a pyramid of height
one over $\Delta_{m-d-1} \times \Delta_d$, we just need to compute
the normalized volume of the product of simplices. Further, because every triangulation of $\Delta_{m-d-1} \times \Delta_d$ is unimodular
we just need to count the number of simplices in any triangulation.
One such triangulation is the staircase triangulation. The
maximal simplices in this triangulation are described as follows.
Consider a $(m-d) \times (d+1)$ rectangular grid. The simplices
in the staircase triangulation of $\Delta_{m-d-1} \times \Delta_d$
are in bijection with paths from the northwest corner of this grid
to the southeast corner where a path consists of steps in the east
or south direction. The total number of steps in each path is $m-1$, and out of these steps $d$ have to be south steps. Therefore there
are a total of $\binom{m-1}{d}$ such paths. These paths 
can be partitioned into those which reach the south edge 
of the grid $k$ steps before the southeast corner where $k=0, \ldots, m-d-1$. The number of these kinds of paths for each $k$ is $\binom{m-k-2}{d-1}$. Finally, the proof of Lemma 11 in \cite{DSV12} implies that $\psi_{LP}(m,d) \geq \binom{m-1}{d}$, and this concludes the proof. 
\end{proof}

\section{Quadratic Programs} \label{sec:QP}
To complete our study of central curves in optimization problems
we will now consider convex quadratic programs with linear constraints.
\begin{equation}\label{eq:QP_primal}
    \begin{split}
        \text{minimize } & \frac{1}{2} x^tQx + cx\\
        \text{subject to } & Ax=b\\
        & x \geq 0,
    \end{split}
\end{equation}
where $Q$ is an $m \times m$ positive definite matrix, $c \in \R^m$, $A$ is $d \times m$ matrix of rank $d$, and $b \in \R^d$. The KKT
conditions that lead to the definition of the central curve are
\begin{equation}\label{eq:KKT_conditions_QP}
    \begin{split}
        &x^t Q + c -\lambda \left(\frac{1}{x_1},\ldots, \frac{1}{x_m}\right)- y^tA =0\\
        & Ax=b, \\
        & x \geq 0.
    \end{split}
\end{equation}
When $Q$, $c$, $A$, and $b$ are generic, we denote by $\psi_{QP}(m,d)$
the degree of the central curve for generic quadratic programs. One can
show by a homotopy continuation argument that it is sufficient
to assume $Q$ to be a generic diagonal matrix. For precise details of this result, we refer the reader to \cite[Section 3.2]{Schlief}. 
With $Q = \mathrm{diag}(q_1,\ldots, q_m)$, after clearing denominators 
and ignoring the nonnegativity constraints $x \geq 0$ in \eqref{eq:KKT_conditions_QP}, we arrive to the following system of 
polynomial equations:
\begin{equation}\label{eq:KKT_QP_cleared}
    \begin{split}
        &q_i x_i^2 + c_ix_i - \lambda - (y^ta_i)x_i = 0 \quad i=1, \ldots, m \\
        & Ax=b,
    \end{split}
\end{equation}
where $a_i$ is the $i$th column of the matrix $A$. As in the linear programming case we have the following lemma. 
\begin{lemm} \label{lemm:QP}
$\psi_{QP}(m,d)$, the degree of the central curve of a generic quadratic program is equal to the number of solutions in $(\C^*)^{m+d+1}$ to the system \eqref{eq:KKT_QP_cleared}
together with an extra equation of the form $ex = f$ where the coefficients of this equation are also generic.
\end{lemm}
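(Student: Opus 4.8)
The plan is to follow the proof of Lemma~\ref{lemm:LP} essentially line by line, since the passage from linear to quadratic programs changes only the first $m$ equations of the cleared KKT system, adding the terms $q_i x_i^2$, and these new terms involve neither $\lambda$ nor the dual variables $y_1,\ldots,y_d$. First I would record the reduction, cited from \cite[Section 3.2]{Schlief}, that for the purpose of computing $\psi_{QP}(m,d)$ we may take $Q=\mathrm{diag}(q_1,\ldots,q_m)$ to be a generic diagonal matrix, so that the relevant system is exactly \eqref{eq:KKT_QP_cleared}. As in the linear case, $\deg\mathcal{C}_{QP}$ equals the number of points in the intersection of the central curve with a generic hyperplane $\{x : ex=f\}$, and genericity of $(e,f)$ places this intersection inside $(\C^*)^m$.

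Next I would analyze the projection $(\C^*)^{m+d+1}\to(\C^*)^m$ that forgets $(\lambda,y)$, restricted to the solution set of \eqref{eq:KKT_QP_cleared} together with $ex=f$. One direction is immediate: any such solution projects to a point of $\mathcal{C}_{QP}\cap\{x : ex=f\}$. For the reverse direction I would argue, word for word as in Lemma~\ref{lemm:LP} and using genericity of $(e,f)$, that every point of $\mathcal{C}_{QP}\cap\{x : ex=f\}$ is the image of at least one point of $(\C^*)^{m+d+1}$ satisfying the cleared system.

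The core of the argument is that this projection is a bijection, i.e.\ that the fiber over each $x^*$ is a single point, and here the quadratic case behaves exactly like the linear one. Once $x=x^*$ is fixed, the quantities $q_i(x_i^*)^2+c_ix_i^*$ are constants, so each of the first $m$ equations in \eqref{eq:KKT_QP_cleared} becomes affine-linear in $(\lambda,y_1,\ldots,y_d)$, while $Ax=b$ and $ex=f$ involve none of these variables and are already satisfied. Hence the set of lifts of $x^*$ is an affine subspace of $\C^{d+1}$; if it contained two distinct points $(x^*,y^*,\lambda^*)$ and $(x^*,z^*,\mu^*)$, then $(x^*,\,ty^*+(1-t)z^*,\,t\lambda^*+(1-t)\mu^*)$ would solve the system for every $t\in\C$, and for all but finitely many $t$ this solution still lies in $(\C^*)^{m+d+1}$ (only finitely many $t$ can send a coordinate to zero, since the two given lifts already have all coordinates nonzero), contradicting the zero-dimensionality of the torus solution set that genericity provides. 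Therefore each fiber is a singleton, the projection is a bijection, and $\psi_{QP}(m,d)$ equals the number of solutions of \eqref{eq:KKT_QP_cleared} together with $ex=f$ in $(\C^*)^{m+d+1}$.

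I do not anticipate a real obstacle. The only point that deserves attention is checking that, for generic data, this torus solution set is genuinely finite once $ex=f$ is adjoined --- equivalently, that the central curve is a curve and that clearing denominators and passing to the torus does not create extra components --- but this is established in the generic setup precisely as it is for linear programs. The actual enumeration of these torus solutions, which is the quadratic analogue of Theorem~\ref{LP-volume} and will involve a Bernstein/mixed-volume computation with a support set modified by the quadratic monomials $y_i^2$ (here, rather, by the new constant terms from $q_ix_i^2$ after substituting the parametrization of $\{Ax=b,\ ex=f\}$), is a separate matter handled later in the section and is not needed for this lemma.
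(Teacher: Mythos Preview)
Your proposal is correct and follows exactly the route the paper takes: the paper's proof is literally the single sentence ``The proof of this lemma is identical to the proof of Lemma~\ref{lemm:LP},'' and you have simply unpacked that identification, correctly noting that the added terms $q_i x_i^2$ involve neither $\lambda$ nor $y$, so the affine-fiber/convex-combination argument carries over verbatim.
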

\begin{proof}
The proof of this lemma is identical to the proof of Lemma \ref{lemm:LP}.
\end{proof}
\begin{thm} \label{QP-volume}
$$  \psi_{QP}(m,d) \leq \sum_{k=0}^{m-d-1}  {m-k-2 \choose d-1} 2^k .$$
This is the volume of the Newton polytope of a polynomial with support in monomials
$$ \lambda, 1, t_1,  \ldots, t_{m-d-1}, t_1^2, t_1t_2, \ldots,  t_{m-d-1}^2$$
$$ y_1, y_1t_1, \ldots, y_1 t_{m-d-1}, \ldots, y_d, y_dt_1, \ldots, y_dt_{m-d-1}$$
\end{thm}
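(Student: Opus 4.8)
The plan is to mirror the proof of Theorem~\ref{LP-volume}, the only new feature being that the quadratic term $q_ix_i^2$ enlarges the Newton polytope. By Lemma~\ref{lemm:QP}, $\psi_{QP}(m,d)$ equals the number of solutions in $(\C^*)^{m+d+1}$ of \eqref{eq:KKT_QP_cleared} together with one generic equation $ex=f$. First I would parametrize the affine solution variety of the $d+1$ linear equations $Ax=b,\ ex=f$ (which has rank $d+1$) as $x=v_0+t_1v_1+\cdots+t_{m-d-1}v_{m-d-1}$ with generic vectors $v_0,\dots,v_{m-d-1}$, and substitute into the remaining $m$ equations $q_ix_i^2+c_ix_i-\lambda-(y^ta_i)x_i=0$. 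This yields a square system of $m$ Laurent polynomials in the $m$ variables $\lambda,y_1,\dots,y_d,t_1,\dots,t_{m-d-1}$. Since each $x_i$ is affine linear in the $t_k$ with generic coefficients, $q_ix_i^2$ contributes exactly the monomials of degree at most two in the $t$'s, $(y^ta_i)x_i$ contributes $y_j$ and $y_jt_k$, and $-\lambda$ contributes $\lambda$; genericity of $(Q,c,A,b,e,f)$ forbids cancellation, so every one of the $m$ equations has support equal to the monomial set displayed in the statement. As in Theorem~\ref{LP-volume} one then invokes Bernstein's Theorem to identify this number with the normalized volume $\mathrm{nVol}(P)$ of the common Newton polytope $P$. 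The subtle point, exactly as in the linear case, is that the coefficients of the $m$ polynomials are not free parameters but explicit polynomial functions of the generic data, so one must check that this constrained family still avoids the discriminantal locus on which the BKK bound fails to be attained; I would dispatch it by the same reasoning used (implicitly) for linear programs, observing that the coefficients of $t_kt_l$, $y_jt_k$, and so on are products $q_i(v_k)_i(v_l)_i$, $(a_i)_j(v_k)_i$, etc., hence sufficiently generic. This BKK-tightness bookkeeping, rather than the volume computation below, is where I expect the genuine obstacle to lie.

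It remains to compute $\mathrm{nVol}(P)$. Set $n=m-d-1$. The polytope $P\subset\R^{n+d+1}$ is a pyramid of height one with apex the exponent vector of $\lambda$ over the base $B$ spanned by the exponent vectors of the monomials not involving $\lambda$, so $\mathrm{nVol}(P)=\mathrm{nVol}_{n+d}(B)$. Writing coordinates as $(t,y)\in\R^n\times\R^d$, the monomials of degree at most two in the $t$'s have convex hull $2\Delta_n$ lying in the slice $y=0$, while for each $j$ the monomials $y_j,y_jt_1,\dots,y_jt_n$ have convex hull the translate $e_{y_j}+\Delta_n$ of the standard simplex; hence
$$ B \;=\; \mathrm{conv}\!\Big(\,2\Delta_n\ \cup\ \bigcup_{j=1}^d\big(e_{y_j}+\Delta_n\big)\Big)\;=\;\mathrm{Cay}\big(2\Delta_n,\underbrace{\Delta_n,\dots,\Delta_n}_{d}\big)$$
is the Cayley polytope of $2\Delta_n$ together with $d$ copies of $\Delta_n$.

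Finally I would evaluate $\mathrm{nVol}(B)$ through the Cayley trick: projecting $B$ onto the $y$-coordinates gives $\Delta_d$, and the fiber over $\mu\in\Delta_d$ is the Minkowski combination $\mu_0(2\Delta_n)+\sum_{j\ge1}\mu_j\Delta_n=(1+\mu_0)\Delta_n$, where $\mu_0=1-\sum_{j\ge1}\mu_j$. Equivalently, the Cayley trick expresses $\mathrm{nVol}_{n+d}(B)$ as the sum of normalized mixed volumes $\sum_{a_0+\cdots+a_d=n}MV_n\big((2\Delta_n)^{[a_0]},\Delta_n^{[a_1]},\dots,\Delta_n^{[a_d]}\big)$, where the bracketed exponent records multiplicity. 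By multilinearity of the mixed volume and $\mathrm{nVol}_n(\Delta_n)=1$, the term indexed by $(a_0,\dots,a_d)$ equals $2^{a_0}$; grouping by $k:=a_0$, for which there are $\binom{n-k+d-1}{d-1}$ admissible tuples $(a_1,\dots,a_d)$, gives
$$ \psi_{QP}(m,d)=\mathrm{nVol}(P)=\sum_{k=0}^{n}\binom{n-k+d-1}{d-1}2^k=\sum_{k=0}^{m-d-1}\binom{m-k-2}{d-1}2^k ,$$
using $n=m-d-1$. Alternatively one can avoid mixed-volume machinery and give an explicit "staircase" triangulation of $B$ generalizing the lattice-path argument of Theorem~\ref{LP-volume}; the extra factor $2^k$ then arises because any triangulation of $2\Delta_n$ has $2^n$ unimodular cells, and a monotone path spending $k$ steps inside a $2\Delta_n$-stratum splits into $2^k$ simplices.
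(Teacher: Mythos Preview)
Your argument is correct and, up to the point where the Newton polytope $P$ is identified, follows the paper's proof verbatim: Lemma~\ref{lemm:QP}, the affine parametrization of $\{Ax=b,\ ex=f\}$, the substitution producing $m$ equations in $(\lambda,y,t)$ with the displayed support, the pyramid reduction, and the appeal to Bernstein's theorem all appear in the paper exactly as you outline them. The paper, like you, does not justify BKK--tightness beyond invoking genericity.

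Where you diverge is in the volume computation. The paper asserts that the base $B$ admits a staircase decomposition indexed by the same monotone lattice paths as for $\Delta_{m-d-1}\times\Delta_d$, and simply states that the cell attached to a path reaching the south edge $k$ steps early has normalized volume $2^k$, without further justification. Your route---recognizing $B$ as the Cayley polytope $\mathrm{Cay}(2\Delta_n,\Delta_n,\ldots,\Delta_n)$ and expanding its normalized volume as the sum of mixed volumes $\sum_{a_0+\cdots+a_d=n}MV\big((2\Delta_n)^{[a_0]},\Delta_n^{[a_1]},\ldots,\Delta_n^{[a_d]}\big)=\sum_k\binom{n-k+d-1}{d-1}2^k$---is a genuinely different and cleaner derivation: it makes the factor $2^k$ transparent via multilinearity, and it avoids having to describe and justify a non-unimodular triangulation. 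Your closing remark that the staircase picture still works, with each path spending $k$ steps in the $2\Delta_n$ stratum splitting into $2^k$ unimodular simplices, is in fact a sharper statement than what the paper gives.
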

\begin{proof} 
By Lemma \ref{lemm:QP} and as in the proof of Theorem \ref{LP-volume} we need to count solutions to 
\eqref{eq:KKT_QP_cleared} plus a generic linear equation $ex = f$
in the torus $(\C^*)^{m+d+1}$. The solutions to the equations $Ax = b$
and $ex = f$ can again be parametrized as
$$ x = v_0 + t_1v_1 + \cdots + t_{m-d-1}v_{m-d-1} $$
where $v_0, \ldots, v_{m-d-1}$ are generic vectors. 
Substituting this into the first $m$ equations in \eqref{eq:KKT_QP_cleared} we obtain $m$ equations in 
$m$ variables $\lambda, y_1, \ldots, y_d, t_1, \ldots, t_{m-d-1}$.
Furthermore, the genericity assumptions guarantee that 
each equation will have support equal to 
$$ \lambda, 1, t_1,  \ldots, t_{m-d-1}, t_1^2, t_1t_2, \ldots,  t_{m-d-1}^2$$
$$ y_1, y_1t_1, \ldots, y_1 t_{m-d-1}, \ldots, y_d, y_dt_1, \ldots, y_dt_{m-d-1}$$
The number of solutions to these $m$ equations in $(\C^*)^m$ is
bounded by the normalized volume of the Newton polytope of the above monomials.
Since this is a pyramid of height one, we just need to compute 
the volume of the Newton polytope of the monomials except $\lambda$. 
This polytope has a staircase triangulation as for $\Delta_{m-d-1} \times \Delta_d$ where each simplex corresponds to a path
as we described in the proof of Theorem \ref{LP-volume}, except
that the volume of a simplex corresponding to a path which reaches 
the south edge of the grid $k$ steps before the southeast corner is 
$2^k$. Therefore $\psi_{QP}(m,d)$ is at most 
$$\sum_{k=0}^{m-d-1}  {m-k-2 \choose d-1} 2^k .$$
\end{proof}

\section{Acknowledgements}
We are grateful to Bernd Sturmfels for his help in Lemma \ref{lem:codim-2},
and to Frank Sottile for pointing out an error in the original version of Theorem \ref{QP-volume}. 

\vspace{-2cm}
\bibliography{references}

\begin{thebibliography}{20}
\providecommand{\natexlab}[1]{#1}
\providecommand{\url}[1]{\texttt{#1}}
\expandafter\ifx\csname urlstyle\endcsname\relax
  \providecommand{\doi}[1]{doi: #1}\else
  \providecommand{\doi}{doi: \begingroup \urlstyle{rm}\Url}\fi

\bibitem[Allamigeon et~al.(2018)Allamigeon, Benchimol, Gaubert, and
  Joswig]{ABGJ18}
X.~Allamigeon, P.~Benchimol, S.~Gaubert, and M.~Joswig.
\newblock Log-barrier interior point methods are not strongly polynomial.
\newblock \emph{SIAM J. Appl. Algebra Geom.}, 2\penalty0 (1):\penalty0
  140--178, 2018.

\bibitem[Bayer and Lagarias(1989{\natexlab{a}})]{BL89-1}
D.~A. Bayer and J.~C. Lagarias.
\newblock The nonlinear geometry of linear programming. {II}. {L}egendre
  transform coordinates and central trajectories.
\newblock \emph{Trans. Amer. Math. Soc.}, 314\penalty0 (2):\penalty0 527--581,
  1989{\natexlab{a}}.

\bibitem[Bayer and Lagarias(1989{\natexlab{b}})]{BL89-2}
D.~A. Bayer and J.~C. Lagarias.
\newblock The nonlinear geometry of linear programming. {I}. {A}ffine and
  projective scaling trajectories.
\newblock \emph{Trans. Amer. Math. Soc.}, 314\penalty0 (2):\penalty0 499--526,
  1989{\natexlab{b}}.

\bibitem[Boyd and Vandenberghe(2004)]{BV2004}
S.~Boyd and L.~Vandenberghe.
\newblock \emph{Convex optimization}.
\newblock Cambridge University Press, Cambridge, 2004.

\bibitem[Cid-Ruiz(2020)]{C2020}
Y.~Cid-Ruiz.
\newblock {Equations and Multidegree for Inverse Symmetric Matrix Pairs}, 2020.
\newblock To appear in Le Matematiche on Linear Spaces of Symmetric Matrices.
  arXiv preprint
  \href{https://arxiv.org/abs/2011.04616}{arxiv.org/abs/2011.04616}.

\bibitem[De~Loera et~al.(2012)De~Loera, Sturmfels, and Vinzant]{DSV12}
J.~A. De~Loera, B.~Sturmfels, and C.~Vinzant.
\newblock The central curve in linear programming.
\newblock \emph{Found. Comput. Math.}, 12\penalty0 (4):\penalty0 509--540,
  2012.

\bibitem[Dedieu et~al.(2005)Dedieu, Malajovich, and Shub]{DMS05}
J.-P. Dedieu, G.~Malajovich, and M.~Shub.
\newblock On the curvature of the central path of linear programming theory.
\newblock \emph{Found. Comput. Math.}, 5\penalty0 (2):\penalty0 145--171, 2005.

\bibitem[Fiacco and McCormick(1968)]{FM1968}
A.~V. Fiacco and G.~P. McCormick.
\newblock \emph{Nonlinear programming: {S}equential unconstrained minimization
  techniques}.
\newblock John Wiley and Sons, Inc., New York-London-Sydney, 1968.

\bibitem[Forsgren et~al.(2002)Forsgren, Gill, and Wright]{FGW2002}
A.~Forsgren, P.~E. Gill, and M.~H. Wright.
\newblock Interior methods for nonlinear optimization.
\newblock \emph{SIAM Review}, 44\penalty0 (4):\penalty0 525--597, 2002.

\bibitem[Grayson and Stillman()]{M2}
D.~R. Grayson and M.~E. Stillman.
\newblock Macaulay2, a software system for research in algebraic geometry.
\newblock Available at \url{http://www.math.uiuc.edu/Macaulay2/}.

\bibitem[Herzog et~al.(1985)Herzog, Vasconcelos, and Villarreal]{HVV85}
J.~Herzog, W.~V. Vasconcelos, and R.~Villarreal.
\newblock Ideals with sliding depth.
\newblock \emph{Nagoya Math. J.}, 99:\penalty0 159--172, 1985.
\newblock ISSN 0027-7630.

\bibitem[Kotzev(1991)]{Kot91}
B.~V. Kotzev.
\newblock Determinantal ideals of linear type of a generic symmetric matrix.
\newblock \emph{J. Algebra}, 139\penalty0 (2):\penalty0 484--504, 1991.
\newblock ISSN 0021-8693.

\bibitem[Lazarsfeld(2004)]{Lazarsfeld}
R.~Lazarsfeld.
\newblock \emph{Positivity in algebraic geometry. {I}}, volume~48 of
  \emph{Ergebnisse der Mathematik und ihrer Grenzgebiete. 3. Folge.}
\newblock Springer-Verlag, Berlin, 2004.

\bibitem[Manivel et~al.(2020)Manivel, Micha\l{}ek, Monin, Seynnaeve, and
  Vodi\v{c}ka]{MMMSV2020}
L.~Manivel, M.~Micha\l{}ek, L.~Monin, T.~Seynnaeve, and M.~Vodi\v{c}ka.
\newblock {Complete Quadrics: Schubert Calculus for Gaussian Models and
  Semidefinite Programming}, 2020.
\newblock To appear in Le Matematiche on Linear Spaces of Symmetric Matrices.
  arXiv preprint
  \href{https://arxiv.org/abs/2011.08791}{arxiv.org/abs/2011.08791}.

\bibitem[Micha\l{}ek et~al.(2020)Micha\l{}ek, Monin, and
  Wi\'{s}niewski]{MMW2020}
M.~Micha\l{}ek, L.~Monin, and J.~A. Wi\'{s}niewski.
\newblock {Maximum Likelihood Degree, Complete Quadrics and
  $\mathbb{C}^*$--Action}, 2020.
\newblock arXiv preprint
  \href{https://arxiv.org/abs/2004.07735}{arxiv.org/abs/2004.07735}.

\bibitem[Nesterov and Nemirovskii(1994)]{NN1994}
Y.~Nesterov and A.~Nemirovskii.
\newblock \emph{Interior-point polynomial algorithms in convex programming},
  volume~13 of \emph{SIAM Studies in Applied Mathematics}.
\newblock Society for Industrial and Applied Mathematics (SIAM), Philadelphia,
  PA, 1994.

\bibitem[Nocedal and Wright(2006)]{NW2006}
J.~Nocedal and S.~J. Wright.
\newblock \emph{Numerical optimization}.
\newblock Springer Series in Operations Research and Financial Engineering.
  Springer, New York, second edition, 2006.

\bibitem[Rhodes(2016)]{Rhodes}
J.~D. Rhodes.
\newblock Computing the {C}entral {S}heet in {L}inear, {Q}uadratic, and
  {S}emidefinite {P}rograms, August 2016.
\newblock Masters {T}hesis.

\bibitem[Schlief(2014)]{Schlief}
D.~Schlief.
\newblock Degree for the {C}entral {C}urve of {Q}uadratic, {P}rograming, July
  2014.
\newblock Masters {T}hesis.

\bibitem[Sturmfels and Uhler(2010)]{SU10}
B.~Sturmfels and C.~Uhler.
\newblock Multivariate {G}aussian, semidefinite matrix completion, and convex
  algebraic geometry.
\newblock \emph{Ann. Inst. Statist. Math.}, 62\penalty0 (4):\penalty0 603--638,
  2010.

\end{thebibliography}
\end{document}